\newtheorem{Thm}[equation]{Theorem}
\newtheorem{Cor}[equation]{Corollary}
\newtheorem{Lem}[equation]{Lemma}
\newtheorem{Con}[equation]{Conjecture}
\theoremstyle{definition}
\newtheorem{Def}[equation]{Definition}
\theoremstyle{remark}
\newtheorem{Rem}[equation]{Remark}
\numberwithin{equation}{section}
\renewcommand{\c@figure}{\c@equation}
\newcommand{\bord}{\partial}
\newcommand{\dem}{{\emph{Proof.} \  }}
\newcommand{\eop}[1]{{\flushright\hfill\fbox{#1}}}
\begin{document} 

\title[shape recognition of planar domains]{Approximation of conformal mappings and novel applications to shape recognition of planar domains}

\author{Sa'ar Hersonsky}

\address{Department of Mathematics\\ 
University of Georgia\\ 
Athens, GA 30602}

\urladdr{http://saarhersonsky.wix.com/saar-hersonsky}
\email{saarh@uga.edu}
\date{}
\keywords{2D shape recognition, electrical networks, harmonic functions, conformal mappings}
\thanks{This work was partially supported by a grant from the Simons Foundation (\#319163 to Saar Hersonsky). }

\begin{abstract} 
Our goal is to provide a novel method of representing 2D shapes, where each shape will be assigned a unique fingerprint - a computable approximation to the 
conformal map of the given shape to a canonical shape in 2D or 3D space (see page 22 for a few examples). In this paper, we make the first significant step in this program where we address  the case of simply, and doubly-connected planar domains. We prove uniform convergence of our approximation scheme to the appropriate conformal mapping.


To this end, we affirm a conjecture raised by Ken Stephenson in the 90's  which predicts that  the Riemann mapping can be  approximated by a sequence of electrical  networks. In fact, we first treat a more general case.
Consider a planar annulus, i.e., a bounded, $2$-connected, Jordan domain,  endowed with a sequence of triangulations exhausting it. We  construct a corresponding sequence of
maps which converge uniformly on compact subsets of the domain, to a conformal homeomorphism onto the interior of a Euclidean annulus bounded by two concentric circles. The resolution of Stephenson's Conjecture then follows by a limiting argument.

With more complex topology of the given shape, i.e, when it has higher genus,  we will  use methods invented by Arabnia \cite{Ara} and Wani-Arabnia \cite{WanAra}. First, to divide the domain into subdomains and thereafter to make the scheme presented in this paper suitable for parallel processing. We will then be able to compare our results for those appearing, for instance, in the work of Arabnia-Oliver \cite{AraOli} that provides algorithms for the translation and scaling of complicated digitalized images.


\end{abstract}
\maketitle
\section{Introduction}
\label{se:Intro}  

\subsection{Riemann's Mapping Theorem and Thurston's disk packing scheme}
\label{se:perspective}

The Riemann Mapping Theorem asserts that any simply connected planar domain which is not the whole plane, 
can be mapped bi-holomorphically onto the open unit disk.  That is, the domains are {\it conformally equivalent}.  After a suitable normalization, this mapping is called the Riemann mapping and  
it is  desirable to have a concrete approximation of it.  In \cite{RoSu}, Rodin and Sullivan proved Thurston's celebrated conjecture \cite{Th} asserting that a scheme based on the Koebe-Andreev-Thurston disk packing theorem    (cf. \cite{An1,An2,Ko1,Th1}) 
converges to the Riemann mapping.   

\smallskip

In order to formulate Thurston's conjecture, which inspired Stephenson's conjecture, we need  to recall  a few definitions.  Let $P$ be a {\it disk packing} in the complex plane $\mathbb{C}$.  An {\it interstice} is a connected component of the complement of $P$, and  one whose closure intersects only three disks in $P$ is called a {\it triangular interstice}. We will let  $\mbox{\rm supp}(P)$ denote the union of the disks in $P$ and all its  bounded interstices, and we will assume that it is  simply-connected. The disks of $P$ that intersect the boundary of its support are called {\it boundary disks}. Two finite disk packings, $P$ and $\tilde P$ in $\mathbb{C}$, will be called {\it isomorphic}, if there exists an orientation preserving homeomorphism $\phi:\mbox{\rm supp}(P)\rightarrow \mbox{\rm supp}(\tilde P)$ such that $\phi(P)= \tilde P$. It is clear that such an isomorphism induces a bijection between the disks of $P$ and the disks of $\tilde P$. 

\smallskip

Let $\Omega \subsetneq \mathbb{C}$ be a bounded, simply connected domain, and let $p_0$ be an interior point in it. For each positive integer $n$, let $P^{n}$ be a disk packing in $\Omega$ in which all bounded interstices are triangular. Assume that there is a sequence of positive numbers $\delta_n$ which converges to zero, such that: 
i)
 the radius of every disk in $P^n$ is smaller than $\delta_n$, and 
ii) every boundary disk in $P^n$ is  at most the distance of $\delta_n$ from $\partial \Omega$. Finally,  let $P_0^n$ be a selected disk in $P^n$ which is closest to $p_0$ or contains it.

\smallskip

The Disk Packing Theorem (Koebe-Andreev-Thurston) implies  that there exists an isomorphic packing ${\tilde P}^n$ in the closed unit disk $\bar{\mathbb D}$ with all of its boundary disks  tangent to the unit circle ${\mathbb S}^{1}$.   Furthermore,  if the given graph is isomorphic to the $1$-skeleton of a triangulation of the Riemann sphere, then the packing is unique up to applying a  M\"obius transformation. Let 
\begin{equation}
\label{eq:maps0}
f_n: \mbox{\rm supp}(P^n)\rightarrow \mbox{\rm supp}(\tilde P^n)
\end{equation}
 be an isomorphism of $P^n$ and ${\tilde P^n}$. Furthermore, normalize ${\tilde P^n}$ by a sequence of M\"{o}bius transformations preserving $U$ so that ${\tilde P}_0^n$, the disk corresponding to  $P_0^n$, is centered at the origin. Thurston conjectured that if the packings $P^n$ are chosen to be sub packings of scaled copies of the infinite hexagonal disk packing of $\mathbb{C}$,  then the sequence of piecewise affine maps (i.e., simplicial)   $f_n$ converges uniformly on compact subsets of $\Omega$ to the Riemann mapping from $\Omega$ to ${\mathbb D}$.

\smallskip

Rodin and Sullivan  \cite{RoSu} proved Thurston's Conjecture by first showing that the maps $f_n$ are $K$-quasiconformal, for some fixed $K$. Hence, there exists a subsequence which will converge to a limit function $f$ which must also be $K$-quasiconformal. 
Rodin and Sullivan further showed that $f$ must be $1$-quasiconformal, and therefore, $f$ is in fact conformal. He and Schramm  \cite[Theorem 1.1]{HeSch} developed profound  techniques which avoid the machinery of quasiconformal mapping that is heavily used in Rodin-Sullivan's proof. Up to date, their theorem and advances \cite{HeSch1} in the simply connected case, is the most advanced. See also their related work on Koebe's Conjecture in \cite{HeSch2}.  

\smallskip

Chow and Luo \cite{ChLu} discovered applications of disk packing to the study of discrete  Ricci flow on surfaces; see also the work of Glickenstein \cite{Gl} for related study. 
There are also applications of circle packings to algorithmic computer vision and  computational conformal geometry due to Gu, Luo and Yau,  Gu, Zeng, Zhang, Luo and Yau,  and Sass, Stephenson and Brock  (cf.  \cite{GuZeLuYa,GuLuYa} \cite{Zh} and \cite{SaStBr} as examples and further advances). More recently, taking a complementary approach to the
one in this paper, Gu, Luo, Sun, and Wu \cite{GuLuSuWu} have developed powerful tools establishing several important results   concerning discrete uniformization of polyhedral surfaces.




\subsection{Electrical networks and Stephenson's conjecture}
\label{se:question}


In his attempts to prove {\it uniformization},  Riemann suggested considering a planar annulus as made of a uniform conducting metal plate. When one applies voltage to the plate, keeping one boundary component at a fixed voltage $k$ and the other at the  voltage $0$, {\it electrical current}\   will flow through the annulus.  The  {\it equipotential} sets  form a family of disjoint, simple closed curves foliating the annulus and separating the boundary curves. The \emph{current} flow sets consist of simple disjoint arcs connecting the boundary components,  and they also foliate the annulus. Together, the two families provide  curved ``rectangular" coordinates on the annulus that can be used to turn it into  a right circular cylinder, or into a (conformally equivalent) circular concentric annulus. 

\smallskip

An {\it electrical circuit} or {\it network}  is a collection of nodes and connecting wires. For instance, a disk packing of a fixed planar domain induces such a network where 
each center of a disk  corresponds to a node and a wire connects each pair of nodes corresponding to tangent boundaries. 
It is therefore reasonable to conjecture that if the domain is made of thin conducting material then its electrical behavior can be approximated by a sequence of networks that approximates its {\it shape}. 

\smallskip

Stephenson's Conjecture from the 90's (see page 63 and Definition 6.5.1 in \cite{Ste2})  is concerned with constructing such an approximation:
\begin{Con}[Stephenson \mbox{\cite{Ste2}}]
\label{con:step}
Given a sequence of networks approximating a simply-connected, bounded, Jordan  domain arising, for instance, from a sequence of disk packing, 
choose conductance constants along the edges (for each network) according to Equation~(\ref{eq:condu}).
Then the sequence of discrete potentials and currents will converge to the ones induced by the Riemann mapping.
\end{Con}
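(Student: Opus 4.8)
The plan is to deduce the conjecture from the annular convergence result described above by viewing the selected disk $P_0^n$ as an inner boundary, thereby turning the simply connected picture into the multiply connected one for which the construction applies. Concretely, the Riemann mapping $f:\Omega\ra\mathbb{D}$ normalized by $f(p_0)=0$ induces the harmonic \emph{potential} $u=-\log|f|$, which is the Green's function of $\Omega$ with pole at $p_0$, and the multi-valued harmonic conjugate $v=\arg f$, which plays the role of the \emph{current} (stream) function; these are exactly the potentials and currents attached to the Riemann mapping. Thus it suffices to produce discrete potentials and currents on the networks that converge to $u$ and $v$ uniformly on compact subsets of $\Omega\setminus\{p_0\}$.

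First I would set up the discrete boundary value problem underlying the annular construction. On each network I treat the boundary $\partial P_0^n$ of the selected disk as an inner boundary component, assign the edge conductances prescribed by Equation~(\ref{eq:condu}), and solve the discrete Dirichlet problem for a function $u_n$ that is discrete harmonic (Kirchhoff's node law) at every interior node, takes a fixed value $k$ on the nodes along $\partial P_0^n$, and vanishes on those approximating $\partial\Omega$. Existence and uniqueness follow from the positive definiteness of the weighted network Laplacian, so $u_n$ is the discrete potential of the annular network $\Omega\setminus P_0^n$. The conjugate \emph{current} $v_n$ is then defined from the induced flow on the dual graph, and the pair $(u_n,v_n)$ assembles into the discrete map $F_n$ of the annular network onto a Euclidean concentric annulus.

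Next I would invoke the main theorem. Since $\Omega\setminus P_0^n$ is a $2$-connected Jordan domain carrying the inherited triangulation, the annular construction applies and yields that $F_n$ converges uniformly on compact subsets to the conformal homeomorphism $\Phi_n$ onto a concentric annulus $\{\rho_n<|w|<1\}$; in particular $u_n$ converges to the logarithm of the modulus and $v_n$ to the angular coordinate of $\Phi_n$. Because every disk of $P^n$ has radius at most $\delta_n\to 0$, the inner radius $\rho_n$ tends to $0$, so the annuli exhaust the punctured disk and the maps $\Phi_n$, normalized so that $P_0^n\mapsto$ the center, converge to the Riemann mapping $f$. Tracking the two harmonic coordinates through this degeneration identifies the limits of $u_n$ and $v_n$ with $-\log|f|$ and $\arg f$, and uniqueness of the harmonic limit upgrades subsequential convergence to convergence of the full sequence.

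The hard part will be the control of this degeneration, where the moduli of the approximating annuli blow up as $\rho_n\to 0$ and the potential develops its logarithmic pole at $p_0$. One must show that the a priori energy (discrete extremal length and modulus) estimates remain uniform away from a shrinking neighborhood of $p_0$, so that the discrete potentials do not lose mass into the puncture, and that the discrete conjugate $v_n$ closes up with the correct flux across the inner boundary, so that the current converges rather than drifting by a spurious period. A second, more structural, point is verifying that the conductances of Equation~(\ref{eq:condu}) make the discrete Laplacian consistent with the continuous one on the nondegenerate triangulations at hand, which is what forces any subsequential limit to be genuinely harmonic, hence conformal, and therefore equal to the potential and current induced by $f$.
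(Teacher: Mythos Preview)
Your high-level strategy --- puncture $\Omega$ at $p_0$, reduce to the annular theorem, and then let the inner boundary shrink --- is exactly the route the paper takes in Theorem~\ref{th:disk}. But there is a genuine logical gap in how you invoke the annular result. Theorem~\ref{th:annulus2} is a statement about a \emph{fixed} polygonal annulus equipped with a whole sequence of quasi-uniform triangulations of mesh $\rho_m\to 0$; its conclusion is that the discrete maps $\phi_m$ on that fixed annulus converge to the conformal map. In your write-up you have, for each $n$, a \emph{single} network on the changing annulus $\Omega\setminus P_0^n$, and you then assert that ``$F_n$ converges to $\Phi_n$''. That sentence has no content: for a single triangulation there is nothing to converge. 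What you would actually need is a quantitative version of the annular theorem with error bounds that are uniform as the inner hole degenerates to a point and the modulus blows up --- precisely the difficulty you flag in your last paragraph, and one the paper never establishes (nor needs to).

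The paper avoids this by cleanly separating the two limits. It fixes a nested sequence of polygonal disks $\Omega_n\searrow\{p_0\}$, and for \emph{each} fixed annulus ${\mathcal A}_n=\Omega\setminus\Omega_n$ it runs Theorem~\ref{th:annulus2} with an entire family $\{{\mathcal T}_{m,{\mathcal A}_n}\}_{m\ge 1}$ to obtain the genuine conformal map $\Psi_n:{\mathcal A}_n\to{\mathcal E}_n$. Only then does it let $n\to\infty$: after composing with the inversion $\sigma(z)=1/z$ (so the outer boundaries of the image annuli coincide with $\mathbb{S}^1$) and normalizing by M\"obius maps, a Koebe-type compactness theorem plus a diagonal argument extracts a limit $\Upsilon:\Omega\setminus\{p_0\}\to\mathbb{C}$, which is bounded and hence extends over $p_0$ by Riemann's removable singularity theorem. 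Along the way a short lemma (Lemma~\ref{le:periodsdecrease}) shows $\mbox{period}(u_n^\ast)$ is strictly decreasing, which is what forces the image annuli $\sigma({\mathcal E}_n)$ to exhaust $\mathbb{D}\setminus\{0\}$. Your proposal gestures at several of these ingredients but does not set up the doubly indexed family, and therefore cannot legitimately appeal to the annular convergence theorem.
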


We have phrased this conjecture  in the more recent formulation of (\ref{eq:condareequal}) (see Section~\ref{se:stephcondu} for the details). In fact, a similar conjecture can be formulated for any domain that can be approximated (in a sense that we will make precise in Section~\ref{se:anncont}) by a sequence of quasi-uniform triangulations (see Definition~\ref{de:regular}) that exhaust the given domain. 

\smallskip

In Theorem~\ref{th:annulus2}, we will formalize and affirm Stephenson's conjecture in the case of an annulus by methods that are different from the ones used in his paper or those mentioned in Section~\ref{se:perspective}. In particular, we will show that there exists  a large class of networks for which the conjecture holds. We will also affirm this conjecture in its original form, i.e., for simply-connected domains in the complex plane.

\subsection{The themes of this paper}
\label{se:plan}

There is a classical and  elaborate theory of conformal uniformization for domains in the Riemann sphere that are bounded by Jordan curves. Let $\Omega$ be such a domain which is also finitely connected.  Koebe proved  \cite{Ko} that $\Omega$ is conformally homeomorphic to some domain $\Omega^{*}$ whose boundary components are circles. Such a domain is called a {\it circle domain}.  Furthermore, $\Omega^{*}$ is unique up to M\"obius transformations.


\smallskip
Discrete uniformization schemes have traditionally been the first step in constructing a sequence of approximations to a conformal map from the given domain (more on this in Section~\ref{se:perspective}).
There is much interest and effort by, for example, Cannon, Floyd and Parry, to provide sufficient combinatorial conditions under which, discrete schemes based on the {\it discrete extremal length} method, will converge to a conformal map in the cases of triangulated  annulus or a quadrilateral; see for instance \cite{Ca} for the starting point and  \cite{CaFlPa3} for their most recent work. However,  Schramm showed  \cite[page 117]{Sch} that if one attempts to use the combinatorics of 
the hexagonal lattice alone, square tilings (as constructed by Schramm's method) cannot be used as discrete approximations for the Riemann mapping. 

\smallskip
In a different vein, 
of much current and recent interest is 
the universality of the critical Ising and other lattice models where discrete complex analysis on graphs played a crucial role (see for instance \cite{ChSm, DCSm}).

\smallskip
In this paper, stemming from our work in \cite{Her1,Her2,Her3,HAT},   we will prove that a certain discrete scheme yields convergence of the mappings described below to a canonical 
{\it conformal} mapping from a given polygonal, planar, annulus, onto the interior of a Euclidean annulus bounded by two concentric circles.   

\smallskip

Specifically, the underlying idea of this paper is rooted in a foundational feature of two dimensional conformal maps. If $f:\mathbb{D}\rightarrow \mathbb{C}$ is a conformal map, then the Cauchy-Riemann equations imply that $\Re(f)$ and $\Im(f)$ are harmonic functions, and that $\Im(f)$ is the harmonic conjugate of $\Re(f)$. For instance,  when $(r,\theta)$ are  polar coordinates in the plane, we have  that 
$u(r,\theta)= \log{r}$ and $v(r,\theta)=\theta$ (when $\theta$ is single valued) are harmonic functions, and $v(r,\theta)$ is the harmonic conjugate of $u(r,\theta)$.  Indeed,
in this paper, we will work with the pair $(g, \bar g^{*})$ which are  
 {\it combinatorial functions} defined on the triangulation and 
its Voronoi dual (to be explained later).



\smallskip

In Theorem~\ref{th:annulus2}, we will show that under certain geometric restrictions on the sequence of triangulations, where each triangulation is  endowed with the conductance constants defined according to Equation~(\ref{eq:condu}), the sequence of combinatorially   defined functions 
$$
\phi_n=\exp\big(\frac{2\pi}{\mbox{\rm period}(\bar g_n^{\ast})}\big( g_n  + i \bar g_n^{*})\big),
$$
 will converge uniformly on compact subsets of a given annulus, to the conformal uniformizing map of the annulus whose form is well understood (see for instance \cite[Section 7]{Cou} or \cite[Theorem 4.3]{Tay}).  

\smallskip
To this end, 
we will employ $L_{\infty}$ convergence results from the theory of the \emph{finite element method}, techniques from discrete potential theory, and classical results form the theory of functions of one complex variable concerning compactness of sequences of holomorphic mappings,  and partial differential equations.  In order to put some of the needed advances over previous work in context,  let us  briefly recall an inspiring  work by Dubejko \cite{Dub}. Let $w$ denote the solution of the Dirichlet problem $\Delta w=f$ for  $x\in\Omega$, and $w=\phi$ for  $x\in \partial\Omega$, where $\Omega$ is a simply-connected, bounded,  Jordan domain with $C^{2}$ boundary, where $f\neq 0\in L^{2}(\Omega)$ and $\phi \in C^{0}(\Omega)$. By applying techniques from the finite volume method, Dubejko proved that  $w$   
can be approximated (in various norms) by a sequence of solutions of discrete Dirichlet boundary value problems. These solutions  belong to a certain \emph{Sobolev  space} and are constructed via a sequence of triangulations (with special properties)  that gets finer while exhausting $\Omega$ from the inside.  Dubejko's work, which utilized  Stephenson's conductance constants in the setting of the \emph{finite volume method} (see \cite{EyGaHe1}),  is not sufficient for constructing approximations of conformal maps from Jordan domains. In fact, already in the simply connected case  his techniques  are not sufficient. This is due to the following reasons:
His methods can be applied only under the assumption that the boundary of $\Omega$ is $C^2$;
second, Dubejko did not address the problem of defining a combinatorial analogue of the harmonic conjugate;  finally, Dubejko applied the  Riemann's mapping theorem in his proof. 

\smallskip

In order to overcome some of these  issues, we will employ a foundational  result from the theory of the finite element method \cite[Theorem 4.1]{ScWa}. This result will provide the  $L_{\infty}$ convergence of the (normalized) $g_n$'s, which are \emph{different} from the ones used by Dubejko, to the real part of the uniformizing map of an annulus with continuous boundary. Once this convergence result is applied, one novel part of this work  is  introducing a combinatorial analouge of the harmonic conjugate function and proving its convergence to its analytical counterpart. 

\subsection{Organization of the paper}
\label{se:plan1}
In Section~\ref{se:stephcondu}, we start by recalling the definition of the conductance constants suggested by Stephenson in his conjecture (Conjecture~\ref{con:step}). We then express these in the way they are going to be utilized in Theorem~\ref{th:annulus2}, the main theorem of this paper, which proves that a certain discrete scheme converges to a uniformizing map of a planar annulus.

\smallskip

In Section~\ref{se:makeconf}, we present three novel  definitions.  First, we define the class of {\it discrete asymptotic harmonic functions}.  Intuitively, a  function in this class is {\it almost} harmonic on a scale determined by the mesh of the triangulation. Since our discrete approximations of  $\Re{(f)}$, the \underline{smooth} uniformizing  map of an annulus, is {\it not} discrete harmonic,  introducing this class of functions is essential to the approximation process described in the main theorem of this paper.  Second, the {\it flux following path} is contained in the one skeleton of a given triangulation; it is used to determine the amount of {\it discrete flux} of a function which ``crosses" a path in the one skeleton of the Voronoi cells of the given triangulation. Finally, 
if $g$ is a discrete harmonic or a discrete asymptotic harmonic function, 
by summing the discrete flux along such paths, we are able to define a  {\it conjugate function}  $\bar g^{*}$ of $g$ and thereafter to prove its convergence. 
 
\smallskip
Section~\ref{se:makeconf11} is devoted to the approximation of a uniformizing map of a planar annuli with a continuous Jordan  boundary. We  first study the case of a polygonal annulus. 
In Theorem~\ref{th:annulus2}, we prove the uniform convergence of our proposed discrete scheme  on compact subsets of the interior of the given annulus, to a conformal homeomorphism.
We are then 
 concerned with the approximation of the uniformization of an  annulus with continuous  Jordan boundary. 
 Corollary~\ref{th:anncont}  demonstrates that Theorem~\ref{th:annulus2} coupled with 
a generalization of a compactness theorem due to  Koebe and a diagonalization process,
 allow the weakening of the boundary regularity assumption  of Theorem~\ref{th:annulus2}  from polygonal to continuous. 

\smallskip

Section~\ref{se:disk} is devoted to the proof of Theorem~\ref{th:disk}, where we provide an approximation of the uniformization of a bounded, simply-connected Jordan domain, the setting in which Stephenson's conjecture (Conjecture~\ref{con:step}) was first stated. The idea is to present the {\it punctured} domain  as an increasing sequence of annuli. Thus, one can apply Theorem~\ref{th:annulus2} to each annulus in the sequence. 
The existence of a converging subsequence of the maps obtained in each step to a bounded, conformal, univalent map is then proved (following the same rationale as  in Corollary~\ref{th:anncont}), and we can therefore restrict attention to the case that the boundary of the domain is polygonal.  Finally,  the  Riemann's removable singularity theorem is used to show that the sequence of the above conformal maps is bounded,  hence, can be extended over the puncture. 

\smallskip

With the aim of  making this paper self-contained, it contains an Appendix.  In Appendix~\ref{se:notation1} and in Appendix~\ref{se:ENFT}, we collect a few important notations, definitions and theorems from the finite element method that are applied in this paper. The reader who is familiar with this method, can skip these sections. However, Theorem~\ref{Th:FVMEtimate}, which is quoted from \cite[Theorem 4.1]{ScWa} is essential for the $L_{\infty}$ convergence analysis  results of this paper. In Appendix~\ref{se:ENDF}, we  describe  the relation between Stephenson's conductance constants and the theory of the finite volume element method.

\subsection*{Acknowledgement}  \small{We are indebted to Gilles Courtois  for carefully reading this paper, a great deal of help in clarifying several proofs  and in making the presentation lucid.  Ridgway Scott, Thierry Gallouet, and Al Schwatz are heartily thanked for graciously sharing with the author their insights regarding the subtle analysis involved in numerical methods for convex and non-convex planar domains. We are grateful to Benson Farb, Rich Schwartz, Ted Shifrin, and  Robert Varley for their patient listening, and 
valuable discussions during the preparation of this paper. Our gratitude to Eric Perkerson for essential help in implementing   the algorithm in Theorem~\ref{th:annulus2} (see for instance the illustrations on page 22). 


\section{Electrical networks  induced by disk packings and Stephenson's conductances}
\label{se:stephcondu}

Let us recall  a few definitions and some notation from \cite{Dub0, Dub,Ste2} and  \cite{Ste3}  in order to express the conductance constants suggested by  Stephenson. Let $P$ be a euclidean disk packing of a domain $\Omega$ for a complex $K$, i.e.,  the contact graph of $P$ is isomorphic to $K$. For an interior edge $(u,v)\in K$, consider the tangent circles, $c_v$ and $c_u$, as depicted in the figure below. Let $c_x,c_y$ be their common neighboring circles.

\smallskip

\begin{figure}[htbp]
\begin{center}
 \scalebox{.55}{ \input{circles.pstex_t}}
 \caption{Constructing an edge conductance in a circle packing.}
\label{figure:quad1}
\end{center}
\end{figure}

The {\it radical center}, $w_x$,  of the triple $\{c_v,c_u,c_x\}$ of circles will denote the center of the circle that is orthogonal to $c_v,c_u$ and $c_x$ and let $w_y$ be the radical center of the triple $\{c_v,c_u,c_y\}$. Let $z_u,z_v$ be the centers of $c_u,c_v$, respectively. Finally, for a vertex $v$, let $R_v$ denote the radius of the circle $c_v$. The {\it sum of the angles} at $v\in P$ is obtained by adding the angles formed by the edges of the contact graph  of $P$ emanating from  $z_v$.

\smallskip

Stephenson's {\it conductance} of an edge
is defined by (see also Definition~\ref{de:cond1} and Equation~(\ref{eq:overatriangle})):
 \begin{equation}
\label{eq:condu}
c(e)=c(u,v)=\frac{|w_x-w_y|}{|z_u-z_v|}.
\end{equation}

It is illuminating to give a probabilistic interpretation to this quantity.  Stephenson's main idea was to chase angle changes at the centers of the circles, as radii change while maintaining (new) disk packing. Given a euclidean circle packing,  the effect of a small {\it increase} in the radius of one of the circles, say $R_v$, is that the sum of the angles at $v$ decreases, while the angle sums  at the neighboring vertices $\{v_1,v_2,\ldots, v_k\}$ {\it increase}. Some of the angle ``distributed" by $v$ arrives at $v_j$ and must be passed along in order to keep a packing at $v_j$. Hence, $R_{v_j}$ has to be adjusted and we need to keep track of the angle changes of its neighbors, and so forth.

\smallskip

In Euclidean geometry, the angles of any triangle add up to $\pi$, so angles in this process will never get lost. In other words, the total angle {\it leaving} one vertex must be divided into portions and then distributed as angles {\it arriving} to its neighbors. This movement can be expressed as a {\it Markov process}, where the transition probability from $v$ to $v_j$, is the proportion  of a {\it decrease} in the sum of the angles at $v$ that becomes an {\it increase} in the sum of the angles at $v_j$. In this Markov process, the random walkers are the quantities of {\it angles} moving from one vertex to another. Thus, for a specific neighbor $u=v_j$, the amount of angle arriving at 
$\psi_u$ is given by     
$\frac{d\psi_u}{dR_v}$. It turns out that the transition probability from $v$ to $u$ as described above is given by 
\begin{equation}
\label{eq:Markov}
\bar\rho(v,u)=\dfrac{\frac{d\psi_u}{dR_v}}{ \sum_{j=1}^{k} \frac{d\psi_{v_j}}{dR_{v_j}}}.
\end{equation}
Also, for a vertex $v\in K$, we  let 
\begin{equation}
\label{eq:condu1}
\rho(v,u)= \dfrac{ c(v,u) }{ \sum_{u\sim v} c(v,u) } .
\end{equation}

It is remarkable that in 2005  (see \cite[Section 18.5]{Ste0})
Stephenson showed  that equality holds between these two Markov transitions, that is,
\begin{equation}
\label{eq:condareequal}
\rho(v,u)=\bar\rho(v,u),\  u\sim v.
\end{equation}

 \section{Smooth harmonic conjugate functions and their combinatorial counterparts.}
\label{se:makeconf}
This  section entails several key definitions and constructions. 
In the first subsection, we collect a few classical PDE existence results that go back to Poincar\'e and Lesbegue. In the second subsection,  we will assume that ${\mathcal A}$ is a fixed, planar, polygonal annulus endowed with a triangulation ${\mathcal T}$. We will write $\partial {\mathcal A}= E_1\cup E_2$ where  $E_1$ denotes  the outer boundary component.

\smallskip

After recalling the definitions of the 
{\it combinatorial laplacian} and the {\it normal derivative}, we will turn to define the class of discrete, asymptotically harmonic functions  (this class includes discrete harmonic functions). The main goal of this section is 
to define a conjugate function to any function in this class (see Definition~\ref{de:asyharmonic}).
One interesting feature of the conjugate function is that, in general, and unlike the smooth category, it is not harmonic.

\subsection{Strong solutions of the Laplace equation and smooth harmonic conjugate functions}
\label{se:back}
The solutions of the Laplace equation, {\it  harmonic functions}, have a foundational role in various areas of mathematics. In this paper, we will 
apply known connections between harmonic functions and conformal maps defined on $\Omega$. Furthermore, we will later on use an approximation scheme of the solution in our construction of a combinatorial analogue of the harmonic conjugate function. 
Let  $\Omega$ be a bounded, planar domain and assume that $u\in C^{2}(\Omega)\cap C(\bar\Omega)$ is  the {\it strong solution} of the Dirichlet boundary value problem for the Laplace equation with non-homogeneous boundary conditions 
\begin{equation}
\label{de:strong0}
\begin{cases}
      & \triangle u=0\     \text{in}\  \Omega, \\
      & u= h\  \text{on}\ \partial\Omega,
\end{cases}
\end{equation}
where $h\in C(\partial \Omega)$, or more generally is the trace of $\tilde{h}\in H^{1}(\Omega)$.


\smallskip

The study of the existence of strong solutions of Dirichlet  boundary value type problems has an impressive history. Poincar\'e introduced the notion of \emph{barriers}, and their importance was further recognized later  by Lebesgue.
A  function $w\in C^{0}(\Omega)$ is called {\it super-harmonic}  in $\Omega$, if for any closed region $\Omega'\subset\Omega$, and any harmonic function $u$ in the interior of $\Omega'$, whenever the inequality 
\begin{equation}
\label{eq;sh}
w\geq u 
\end{equation}
holds on the boundary of $\Omega'$, it also holds in the interior of $\Omega'$.

\smallskip

Let $\xi$ be a point in $\partial \Omega$, then a $C^{0}(\bar \Omega)$ function $w=w_\xi$ is called a
{\it barrier} at $\xi$ relative to $\Omega$. If
 $w$ is super-harmonic in $\Omega$, 
   it approaches $0$ at $\xi$, and outside of any sphere about $\xi$,  it has a positive lower bound in $\Omega$.  Two profound consequences of the existence of a barrier are the following.
\begin{Thm}[\mbox{\cite[Theorem III, page 327]{Kel}}]
\label{th:exist}
A necessary and sufficient condition that the Dirichlet problem for $\Omega$ is solvable for  arbitrary assigned continuous boundary values,  is that a barrier for $\Omega$ exists at every point in $\partial \Omega$.
\end{Thm}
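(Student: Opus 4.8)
The plan is to prove the two implications separately, handling necessity first since it is the shorter direction. Assume the Dirichlet problem is solvable for every continuous boundary datum, fix $\xi\in\partial\Omega$, and apply the hypothesis to the particular continuous function $f(\eta)=|\eta-\xi|$, obtaining a harmonic $w=w_\xi\in C^0(\bar\Omega)$ with $w=f$ on $\partial\Omega$. I would then verify directly that $w$ is a barrier at $\xi$: it is harmonic, hence in particular super-harmonic (equality holds in \eqref{eq;sh}); it is continuous on $\bar\Omega$ with $w(\xi)=f(\xi)=0$, so it approaches $0$ at $\xi$; and since $f\ge 0$ with $f>0$ off $\xi$, the minimum principle gives $w>0$ in $\Omega$. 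Finally, for any sphere $B(\xi,r)$ the values of $w$ on the boundary of $\Omega\setminus\overline{B(\xi,r)}$ are bounded below by a positive constant (on $\partial\Omega$ one has $w=f\ge r$ there, and on the interior sphere $w$ is positive), so the minimum principle yields a positive lower bound for $w$ on $\Omega\setminus B(\xi,r)$. Thus $w$ is the required barrier.

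For sufficiency, assume a barrier $w_\xi$ exists at every $\xi\in\partial\Omega$ and fix continuous data $f$. I would invoke Perron's method of subharmonic functions: introduce the Perron family $\mathcal{S}_f$ of all subharmonic $v$ on $\Omega$ with $\limsup_{x\to\xi}v(x)\le f(\xi)$ for every $\xi\in\partial\Omega$, and set $u(x)=\sup_{v\in\mathcal{S}_f}v(x)$. The family is nonempty (it contains the constant $\min_{\partial\Omega}f$) and uniformly bounded above by $\max_{\partial\Omega}f$. The first substantial step is Perron's principle, that $u$ is harmonic in $\Omega$. I would prove this locally on a disk $B$ whose closure lies in $\Omega$, by taking a sequence in $\mathcal{S}_f$ converging to $u$ at a point, replacing each member by its harmonic (Poisson) modification on $B$ — which remains in $\mathcal{S}_f$ and only increases $v$ — and applying Harnack's principle to conclude the limit is harmonic on $B$. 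Crucially, this step uses only solvability of the Dirichlet problem on disks via the Poisson integral, not the barrier hypothesis.

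The barriers enter only through the boundary behavior, and this is where the crux of the argument lies: I must show $\lim_{x\to\xi}u(x)=f(\xi)$ at each $\xi\in\partial\Omega$. Given $\epsilon>0$, continuity of $f$ supplies a neighborhood of $\xi$ on which $|f-f(\xi)|<\epsilon$; using the positive lower bound of $w_\xi$ away from $\xi$, I can choose $k>0$ so large that $f(\xi)-\epsilon-k\,w_\xi$ lies below the data (hence is a competitor in $\mathcal{S}_f$) while $f(\xi)+\epsilon+k\,w_\xi$ dominates the data (hence is a super-harmonic majorant of every member of $\mathcal{S}_f$). These two functions sandwich $u$ near $\xi$, and since $w_\xi(x)\to 0$ as $x\to\xi$, letting $x\to\xi$ and then $\epsilon\to 0$ gives the boundary value. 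Together with harmonicity this shows $u$ solves the Dirichlet problem with data $f$.

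The main obstacle is Perron's principle that the upper envelope $u$ is harmonic: it rests on the full machinery of subharmonic functions — the sub-mean-value property, closure of $\mathcal{S}_f$ under pointwise maxima and under Poisson modification, and Harnack's principle — all of which must be assembled before the barrier estimate can even be stated cleanly. The barrier sandwich itself, though it is the conceptual heart that converts a purely local boundary condition into global boundary regularity, is a short comparison-principle argument once $u$ is known to be harmonic.
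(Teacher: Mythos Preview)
The paper does not supply its own proof of this theorem; it is quoted verbatim as \cite[Theorem III, page 327]{Kel} and used as a black box in the development of Section~\ref{se:back}. Your proposal is the classical Perron argument, which is exactly the approach in the cited reference, and it is correct as outlined: the necessity direction via solving with datum $f(\eta)=|\eta-\xi|$ is standard, and the sufficiency direction via the Perron upper envelope together with the barrier sandwich for boundary regularity is the textbook route.
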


It is therefore important to understand which domains in the Euclidean plane  satisfy the hypothesis of Theorem~\ref{th:exist}. Indeed,  general sufficient conditions  can be described in terms of local 
properties of the boundary (see for instance \cite[Proposition 5.13]{Tay}).

\begin{Thm}[Lebesgue]
\label{th:existbarrier}
The Dirichlet boundary value problem  (\ref{de:strong0})  is solvable  for arbitrary assigned continuous boundary  values
if every component of the complement of the domain consists of more than a single point. 
\end{Thm}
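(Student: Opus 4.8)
The plan is to reduce the global solvability statement to a purely local construction of barriers and then to exploit the fact that we are in the plane. By Theorem~\ref{th:exist}, the Dirichlet problem (\ref{de:strong0}) is solvable for arbitrary assigned continuous boundary data if and only if a barrier exists at every point of $\partial\Omega$, and Theorem~\ref{th:conti} guarantees that, once such barriers are available, the resulting solution attains the prescribed values continuously. Thus I would aim to produce, for each $\xi\in\partial\Omega$, a function $w_\xi\in C^0(\bar\Omega)$ that is super-harmonic in $\Omega$, approaches $0$ at $\xi$, and admits a positive lower bound in $\Omega$ outside every fixed sphere about $\xi$. It is here that the hypothesis on the complement enters: it is exactly the condition needed to rule out the degenerate, capacity-zero situation in which no barrier can exist.

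Next I would fix $\xi\in\partial\Omega$ and let $K$ be the connected component of $\hat{\mathbb C}\setminus\Omega$ that contains $\xi$. By hypothesis $K$ is a nondegenerate continuum, so it has positive diameter; choosing $d>0$ with $\mathrm{diam}(K)>d$ and using connectedness, $K$ meets every circle $\{\,|z-\xi|=\rho\,\}$ with $0<\rho<d$, so within $\overline{B(\xi,d)}$ it contains a subcontinuum $E$ joining $\xi$ to $\partial B(\xi,d)$. Let $W$ be the component of $\hat{\mathbb C}\setminus K$ containing $\Omega$. Since $K$ is connected, $W$ is a simply connected domain, and as $K$ is nonempty with more than one point, $W\neq\hat{\mathbb C}$ and its complement carries more than one point, so $W$ is conformally a disk. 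The Riemann mapping theorem then furnishes a conformal homeomorphism $\psi\colon W\to\mathbb D$, and I would take as candidate barrier $w:=-\log|\psi|$, which is positive and harmonic (hence super-harmonic) on $\Omega\subset W$ and satisfies $w(z)\to 0$ as $z$ approaches $\partial W$, where $|\psi|\to 1$. Note that $\xi\in K\subset\partial W$, so $\xi$ is a genuine boundary point of $W$.

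The hard part will be verifying that $w$ tends to $0$ \emph{precisely} at $\xi$ while remaining bounded below by a positive constant away from $\xi$; equivalently, that $\psi$ extends continuously to $\xi$ with $|\psi(\xi)|=1$. For a general simply connected $W$ the boundary behavior of $\psi$ is controlled by the theory of prime ends, and a single complementary point would be invisible to $\psi$ (it has logarithmic capacity zero) — this is exactly the excluded degenerate case. Because $K$ is a nondegenerate continuum reaching $\xi$, the point $\xi$ is an accessible boundary point lying on a set of positive capacity, which forces the desired continuous extension. Concretely, I expect it to be cleaner to argue locally: bound $w$ from below by the harmonic measure of $\partial B(\xi,d)$ in $\Omega\cap B(\xi,d)$, and use the modulus estimates for the slit region $B(\xi,d)\setminus E$ to obtain both the vanishing at $\xi$ and the uniform positive lower bound outside a neighborhood of $\xi$. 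Once the three defining properties of a barrier are checked at every $\xi\in\partial\Omega$, Theorem~\ref{th:exist} yields solvability of (\ref{de:strong0}) for arbitrary continuous boundary values, completing the argument.
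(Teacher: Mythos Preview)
The paper does not prove this theorem at all: it is quoted as a classical result of Lebesgue, with a pointer to \cite[Proposition 5.13]{Tay}, and immediately used to derive Corollary~\ref{co:Jordan domains}. So there is no ``paper's own proof'' to compare against; your proposal must stand on its own.

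There is a genuine gap in your global construction. Take $\Omega$ to be the annulus $\{1<|z|<2\}$, $\xi=1$, and $K=\{|z|\le 1\}$. Then $W=\{|z|>1\}$, $\psi(z)=1/z$, and $w(z)=-\log|\psi(z)|=\log|z|$. This $w$ tends to $0$ not only at $\xi$ but at \emph{every} point of the inner circle, so it fails the defining requirement ``outside of any sphere about $\xi$, it has a positive lower bound in $\Omega$.'' The same failure occurs whenever $\partial\Omega\cap K$ contains points other than $\xi$, which is the generic situation in this paper (annuli, multiply connected domains). You sense this and retreat to a local argument with harmonic measure and modulus estimates, but nothing is actually carried out; in particular you never produce the standard localization step (replace $w$ by $\min(w,m)$ on $\Omega\cap B(\xi,r_0)$ and by $m$ outside, where $m=\inf_{\Omega\cap\partial B(\xi,r_0)} w>0$) that turns a local barrier into a global one in Kellogg's sense.

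There is also a methodological issue. You invoke the Riemann mapping theorem to build $\psi$, but one of the paper's stated aims (see the discussion of Dubejko in \S\ref{se:plan}) is precisely to \emph{avoid} relying on the Riemann mapping theorem. The classical route bypasses this entirely: since the component $K\ni\xi$ is a nondegenerate continuum, $\Omega$ lies in a single simply connected component of $\hat{\mathbb C}\setminus K$ not containing $\xi$, so a single-valued branch of $\log(z-\xi)$ exists on $\Omega$; then $w(z)=-\mathrm{Re}\big(1/\log((z-\xi)/\delta)\big)$ for small $\delta>0$ is harmonic and positive on $\Omega\cap B(\xi,\delta)$, tends to $0$ at $\xi$, and is bounded below on $\Omega\cap\partial B(\xi,\delta)$, after which the $\min$-extension gives a global barrier and Theorem~\ref{th:exist} finishes. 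This is both more elementary and consistent with the paper's philosophy.
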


For the applications of this paper,  the following corollary is essential.
\begin{Cor}
\label{co:Jordan domains}
Let $\Omega$ be a Jordan domain, then the Dirichlet boundary problem (\ref{de:strong0}) is solvable in $\Omega$  for arbitrary continuous boundary values.
\end{Cor}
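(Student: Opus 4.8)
The plan is to deduce the corollary directly from Lebesgue's theorem (Theorem~\ref{th:existbarrier}), the only real task being to verify that its purely topological hypothesis is met for a Jordan domain. Recall that $\Omega$ is, by definition, the bounded, open region enclosed by a Jordan curve $J=\partial\Omega$. By the Jordan curve theorem, $\mathbb{C}\setminus J$ has exactly two connected components: the bounded one, which is $\Omega$ itself, and an unbounded one $U$.

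First I would identify the complement of $\Omega$. Since $\Omega$ is open and disjoint from both $J$ and $U$, we have $\mathbb{C}\setminus\Omega = J\cup U = \overline{U}$, the closure of the exterior component. As the closure of a connected set, $\overline{U}$ is connected, so the complement of $\Omega$ consists of a single component. Moreover this component contains the entire curve $J$, hence it is a nondegenerate continuum and in particular contains more than one point. Thus the hypothesis of Theorem~\ref{th:existbarrier} --- that every component of the complement of the domain consists of more than a single point --- holds.

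Applying Theorem~\ref{th:existbarrier} then yields a solution of the Dirichlet problem (\ref{de:strong0}) for any continuous boundary data $h$. To confirm that this solution is a strong solution in the sense required, I would record two standard facts: interior smoothness is automatic, since a harmonic function is real-analytic and in particular $C^2$ on $\Omega$; and continuous attainment of the boundary values, i.e.\ $u\in C^0(\bar\Omega)$ with $u|_{\partial\Omega}=h$, follows from Theorem~\ref{th:conti} once a barrier is known to exist at every boundary point. The existence of such barriers is in turn guaranteed by Theorem~\ref{th:exist}, whose criterion is equivalent to the solvability just established (alternatively, one may exhibit an explicit barrier at each $\xi\in J$ from the nondegeneracy of the exterior near $\xi$).

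I do not expect a genuine obstacle here: the entire content is the topological observation that the complement of a Jordan domain is a single nondegenerate continuum, after which the three quoted theorems do all the analytic work. The one point that warrants care is the bookkeeping between the notion of \emph{solvability} appearing in Theorems~\ref{th:exist} and~\ref{th:existbarrier} and the regularity packaged into the definition of a strong solution, so that the output is indeed a function $u\in C^{2}(\Omega)\cap C^{0}(\bar\Omega)$ satisfying (\ref{de:strong0}).
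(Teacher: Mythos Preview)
Your approach is correct and is exactly what the paper intends: the corollary is stated immediately after Lebesgue's theorem (Theorem~\ref{th:existbarrier}) with no explicit proof, so the reader is meant to supply precisely the topological verification you give --- that the complement of a Jordan domain is a nondegenerate continuum --- and then invoke that theorem. Your additional remarks on interior regularity and boundary continuity via Theorems~\ref{th:exist} and~\ref{th:conti} are more than the paper spells out, but they are the right way to close the loop with the ``strong solution'' language of (\ref{de:strong0}).

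One small caveat worth noting: in this paper the phrase ``Jordan domain'' is used more loosely than the classical simply connected sense you adopt --- for instance, the abstract calls a planar annulus ``a bounded, $2$-connected, Jordan domain,'' and the corollary is later applied (via Remark~\ref{re:PononHom}) to annuli. Your argument extends immediately: if $\partial\Omega$ is a finite disjoint union of Jordan curves, each component of $\mathbb{C}\setminus\Omega$ contains one of these curves and hence is not a single point, so Theorem~\ref{th:existbarrier} still applies. You may want to phrase the proof to cover this case as well.
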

The (strong)  maximum principle (see for instance \cite{Chi}) implies that a strong solution is unique. Therefore, in the special case studied in the this paper, where $\Omega={\mathcal A}$ is a  planar annulus (with polygonal or even continuous boundary),  we make the following:

\begin{Def}
\label{de:solutiononann} We call  $u\in C^{2}({\mathcal A})\cap C^{0}(\bar{\mathcal A})$  the strong solution of the Dirichlet boundary value problem of the Laplace equation, if 

\begin{equation}
\label{de:strong}
\begin{cases}
      & \triangle u=0\     \text{in}\ {\mathcal A}, \\
      &\  u= 1\ \text{on}\ E_1,\  \text{and}\ u=0\  \text{on}\ E_2.
\end{cases}
\end{equation}
\end{Def}

\smallskip


We end this subsection by recalling the following definition which is valid for any harmonic function. 
\begin{Def}[A smooth harmonic conjugate (\mbox{see for instance \cite[Chapter 1.9]{Neh}})]
\label{de:smoothconju}
Let $(x_0,y_0)$ be a point in ${\mathcal A}$,  and let $(x,y)$ in ${\mathcal A}$ be an arbitrary point. Let $\gamma$ be a simple, piecewise-smooth 
curve joining $(x_0,y_0)$ to $(x,y)$ in ${\mathcal A}$. Let $\beta$ be any simple, closed,  counter-clockwise oriented,  piecewise smooth curve in ${\mathcal A}$ whose winding number is equal to 1. Furthermore,  let $s$ denote the arc-length parameter of these curves, and let $\hat n$ denote a unit normal pointing to the right of the tangents to these curves.

\smallskip

A  (multivalued) harmonic conjugate of $u$ is defined by 
\begin{equation}
u^{*}(x,y)= u^{*}(x_0,y_0) +\int_{\gamma}\frac{\partial u}{\partial \hat n} ds,\  
\end{equation}
where $u^{*}(x_0,y_0)$ is some arbitrary fixed real constant, and the period of $u^*$ is defined by 
\begin{equation}
\label{eq:periodsmooth}
\text{period}(u^{*})=\int_\beta \frac{\partial u}{\partial \hat n} ds.
\end{equation}
\end{Def}


\begin{Rem}
\label{re:pathind}
It is well known that a smooth harmonic conjugate $u^{*}$  is defined up to a constant, i.e.,  an assigned value at a point in the annulus. Furthermore, the function values at any point differ by integral multiples of its period, i.e., $u^*$  is multivalued. 
\end{Rem}

\subsection{Discrete harmonic and asymptotically harmonic functions, and their conjugates}
\label{se:asyhar}

We now turn to defining a combinatorial function analogous to $u^{*}$.   We will start with some notation and definitions from  the subject of discrete harmonic analysis  that will be used throughout the rest of this paper (see for instance \cite{BeCaEn}  or \cite[Section 1.1]{Her3}). Let $\Gamma=(V,E,c)$  be a planar {\it finite network}; that is,  
  a planar, simple, and
finite connected graph with vertex set $V$ and edge set $E$, where each edge $(x,y)\in E$ is
assigned a {\it conductance} $c(x,y)=c(y,x)>0$. Let ${\mathcal P}({V})$
denote the set of non-negative functions on $V$. Given $F\subset V$, we denote by $F^{c}$ its complement in
$V$.  Set
${\mathcal P}(F)=\{u\in {\mathcal P}(V):S(u)\subset F\}$, where $S(u)=\{ x \in V: u(x)\neq 0 \}$.  The set  $\delta F=\{x\in F^{c}: (x,y)\in E\ {\mbox
{\rm for some}}\ y\in F \}$ is called the {\it vertex boundary} of
$F$. Let ${\bar F}=F\cup \delta F$, and let $\bar E=\{(x,y)\in
E :x\in F\}$.
Let
${\bar \Gamma}(F)=({\bar F},{\bar E},{\bar c})$ be the network
such that ${\bar c}$ is the restriction of $c$ to ${\bar E}$. 
We write $x\sim y$ if $(x,y)\in \bar E$, $y$ is  called a neighbor of $x$, and we let $N_x$ denote the cardinality of the set of neighbors of $x$.  The following operators are discrete analogues of
classical notions in continuous potential theory (see for instance \cite{Fu} and  \cite{ChGrYa}).

\begin{Def}
 \label{de:lapandnorm}  
 Let $u\in {\mathcal P}({\bar  F})$. 
 Then 
 for $x\in F$, the function 
 \begin{equation}
 \label{eq:lap}
 \Delta u(x)=\sum_{y\sim x}c(x,y)\left( u(x)-u(y) \right )
 \end{equation} is called
  the \emph{Laplacian} of $u$ at $x$.  For $x\in \delta(F)$, let $\{y_1,y_2,\ldots,y_m\}\in F$ be its neighbors.
  
\smallskip  
  
The \emph{normal derivative} of $u$ at a point
$x\in \delta F$ with respect to a set $F$ is defined by 
\begin{equation}
\label{eq:nor}
\frac{\bord u}{\bord n}(F)(x)= \sum_{y\sim x,\
y\in F}c(x,y)  (u(x)-u(y)).
\end{equation} 
Finally,  $u\in {\mathcal P}({\bar F})$ is called \emph{discrete harmonic} in $F\subset V$ if
$\Delta u(x)=0,$ for all $x\in F$.
\end{Def}

\smallskip
We will now turn a triangulation of a polygonal domain into a finite network endowed with geometrically chosen conductances. The choice of the conductances depends on an interesting relation between the given triangulation and its dual complex. These conductances are identical to Stephenson's (see (\ref{eq:condu})), however, in this paper they are motivated by a  scheme of approximating  flux of smooth functions (see the next section)
    and the Finite Element Method (see Section~\ref{se:ENDF}).

\smallskip

Let ${\mathcal T}$ be a triangulation of a polygonal domain $\Omega$.
The induced {\it  control volumes}, or the {\it Voronoi cells} which we will  associate with a triangulation ${\mathcal T}$ are defined as follows. For each triangle $T\in {\mathcal T}$, let $c_{T}$ denote the {\it circumcenter} of $T$, which by definition is the intersection point of the perpendicular bisectors of the edges. We join $c_{T'}$ to $c_{T}$ by a segment $[c_{T'}, c_{T}]$ whenever $T$ and $T'$ share an edge.  This procedure divides each (interior) triangle $T$ into three quadrilaterals and induces a new decomposition of $\Omega$.  The star of a vertex $x \in {\mathcal T}$ is defined as the union of all edges and triangles in ${\mathcal T}$ that contain $x$ and will be denoted by $\text{Star}(x)$. The  control volume $\Omega_x$  of a vertex  $x\in {\mathcal T}$ is defined to be the star of $x$ in this new decomposition.  

\smallskip


\begin{figure}[htbp]
\begin{center}
 \scalebox{.55}{ \input{CControlVolume.pstex_t}}
 \caption{A circumcenter, the star of a vertex, and a Voronoi cell.}
\label{figure:ControlVolume}
\end{center}
\end{figure}

Let $\{ {\mathcal T}_\rho \}_{\rho>0}$ be a
family of  $\tau$-quasi-uniform  triangulations of $\Omega$ (cf. Definition~\ref{de:regular}).  Let $V_{\rho}(T)$ denote the set of vertices of $T\in {\mathcal T}_\rho$, and let $V_{\rho}^{0}({\mathcal T}_{\rho})$ denote the set of interior vertices of $V_{\rho}({\mathcal T}_{\rho})=\cup_{T\in
 {\mathcal T}_{\rho}} V_{\rho}(T)$, enumerated by $\{x_1^\rho, x_2^\rho,\ldots, x_{M(\rho)}^\rho\}$.
Each $\Omega_{x_i}$ is an open, simply connected, and polygonally  bounded set. Its boundary, $\partial \Omega_{x_i}$, consists of finitely many (straight) line segments $\Gamma_{i,j}=\partial \Omega_{x_i}\cap \partial \Omega_{x_j}$, $j=1,\ldots,n_i$, where $n_i$ is the number of vertices adjacent to $x_i$; note that along each $\Gamma_{i,j}$  the normal $\hat n|\Gamma_{i,j} = \hat n_{i,j}$ is constant.

\begin{Def}
\label{de:cond1}
Let $m_{(i,j)}$ denote the length of $\Gamma_{i,j}$, and  let $d_{ij}=|x_i - x_j|$ denote the Euclidean distance between $x_i$ and $x_j$. Then the conductance of the edge $[x_i,x_j]$ is defined by
\begin{equation}
\label{eq:cond1}
c[x_i,x_j]=\frac{m_{(i,j)}}{d_{ij}}.
\end{equation}
\end{Def}

Hence, $m_{(i,j)}$ is equal to
$|c_{T}-c_{T'} |$, where $T$ and $T'$ are the (only) two triangles that $\Gamma_{i,j}$ intersects.  Given such a  triangulation ${\mathcal T}$ of ${\mathcal A}$, following \cite[Chapter 2]{GrRoSt}, 
for each one of its Voronoi cells $\Omega_i$ which is centered at $x_i\in{\mathcal T}^{(0)}$, we  define two quantities which are determined by ${\mathcal T}$:
\begin{equation}
\label{eq:lengthofedges}
\lambda_i=\lambda_{\Omega_i} = \left( \max_{j\in N_{x_i}} m_{(i,j)}\right)^{1/2}\ \text{and }\ \lambda=\max_{x_i\in {\mathcal T}^{(0)}   } \lambda_i,
\end{equation}
where $l(\cdot)$ denotes Euclidean length.

\smallskip

In this paper, we will assume the following.

\begin{description}

\item[(V0)]
 Every triangulation ${\mathcal T}$  is $\tau$-quasi uniform for some fixed $\tau >0$ and consists exclusively of {\it nonobtuse}  triangles.  
\end{description}
 
It is well  known (see for instance \cite{Zl}) that the $\tau$-quasi uniform condition is equivalent to Zl\'{a}mal's condition: there exists a positive constant, $\theta_{\text{min}}$,  such that, for all $T\in \bigcup_\rho {\mathcal T}_\rho$, and for any angle $\theta_T$ of $T$, we have
\begin{equation}
\label{minang}
\theta_{\text{min}}\leq \theta_T.
\end{equation}

\smallskip

Assumption (V0) also implies  that  ${\mathcal T}$  is a {\it Delaunay triangulation}, i.e., no point in the vertex set of ${\mathcal T}$  lies inside the circumcircle of any triangle in ${\mathcal T}$, and the corresponding Voronoi diagrams can be constructed by means of the perpendicular bisectors of the triangles' edges (see for instance \cite[Theorem 6.5]{AnKn}).


\medskip

We now define a class of combinatorial functions that will naturally appear in the next section. 
The combinatorial counterpart of the real part of the uniformizing mapping of an annulus belongs 
to this class of discrete functions.

\begin{Def}
\label{de:asyharmonic} 
Let $\alpha \in \mathbb{R}$ be a positive constant. Let ${\mathcal T}$ be a triangulation of ${\mathcal A}$, with Voronoi cells  $\{\Omega_i\}_{i\in J}$.
A function $g :{\mathcal T}^{(0)}\rightarrow \mathbb{R}$ is said to be  asymptotically harmonic of order $\alpha$ with respect to  conductance constants  $\{c_{(i,j)}=c(x_i,x_j)\}$, if there exists a non-negative constant, $d$, such that 
\begin{equation}
\label{eq:asydishar}
 \big|\Delta g(x_i)  \big|   =   \big|\sum_{j\in N_{x_i}} c_{(i,j)} \big( g(x_j) - g(x_i)\big )  \big| \leq d \lambda^{\alpha}, \text{  for all } x_i\in {\mathcal T}^{(0)}.
\end{equation}
\end{Def}
Note that the case $d=0$ corresponds to $g$ being discrete harmonic.

\smallskip

\begin{figure}[htbp]
\begin{center}
 \scalebox{.55}{ \input{Aloop.pstex_t}}
 \caption{An illustration of the path $\gamma$  (with arrows) in  Lemma \ref{le:almostzeroflux}.}
\label{figure:fluxfollowpath0}
\end{center}
\end{figure}

The following lemma provides an estimate for the summation of the normal derivative of $g$ along {\it special} closed curves in ${\mathcal T}^{(1)}$.  This sum encapsulates the {\sl discrete flux} of $g$ through the boundary of the union of those Voronoi cells which such a closed curve encloses.

\begin{Lem}[Asymptotic flux estimate]
\label{le:almostzeroflux}
Let $g :{\mathcal T}^{(0)}\rightarrow \mathbb{R}$ be a discrete, harmonic or asymptotically harmonic of order $\alpha$, with respect to conductance constants  $\{c_{(i,j)}\}$.
Then, for any homotopically trivial (in ${\mathcal A}$), closed path $\gamma\subset{\mathcal T}^{(1)}$ which contains an integer number of 
 Voronoi cells $\Omega_{x_i}$, $x_i\in  {\mathcal T}^{(0)} $, we have
\begin{equation}
\label{eq:fluxisalmostzero}
\sum_{x\in \gamma} \frac{\partial g}{\partial n}(x) = 
      0,\  \text{ if } g \text{ is harmonic.}  
\end{equation}
Furthermore, if $g$ is asymptotically harmonic or order $\alpha$, then there exists a positive constant, $D$, such that 
\begin{equation}      
      \sum_{x\in \gamma} \frac{\partial g}{\partial n}(x) \leq 
      D \lambda^{\alpha}. 
\end{equation}
\end{Lem}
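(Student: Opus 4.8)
The plan is to establish a discrete Green (divergence) identity that rewrites the flux of $g$ across $\gamma$ as a single sum of cell-Laplacians over the region enclosed by $\gamma$, and then read off both cases directly from the hypothesis on $g$. In the smooth setting this is just $\int_{\partial D}\frac{\partial u}{\partial n}\,ds=\int_D\Delta u$; here I want the exact combinatorial analogue, and the two structural hypotheses on $\gamma$ are precisely what make it exact.

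First I would use that $\gamma$ is homotopically trivial in $\mathcal{A}$, so it bounds a topological disk $D\subset\mathcal{A}$, and that $\gamma$ encloses an integer number of Voronoi cells, so $D$ is exactly the union $\bigcup_{i\in S}\Omega_i$ for a finite index set $S\subset J$. This is the key reduction: it guarantees there are no ``partial'' cells, so that every Voronoi edge lying in $D$ either separates two cells whose centers both belong to $S$, or else is one of the edges making up $\gamma$. I would then expand $\int_{x\in\gamma}\frac{\partial g}{\partial n}(x)$ according to Definition~\ref{de:lapandnorm} as the total discrete flux $\sum c(i,j)\big(g(x_i)-g(x_j)\big)$ leaving $D$ across the edges of $\gamma$, and regroup this boundary sum cell by cell as $\sum_{i\in S}\sum_{j\in N_i}c(i,j)\big(g(x_i)-g(x_j)\big)=\sum_{i\in S}\Delta g(x_i)$, up to the overall sign fixed by the orientation of $\gamma$. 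The regrouping is legitimate because the contribution of any edge $[x_i,x_j]$ interior to $D$ occurs twice, once as outflux from $\Omega_i$ and once from $\Omega_j$, and these cancel by the antisymmetry $c(i,j)(g(x_i)-g(x_j))=-c(j,i)(g(x_j)-g(x_i))$; only the edges along $\gamma$ survive, which is exactly the flux we started with.

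Once the identity $\int_{x\in\gamma}\frac{\partial g}{\partial n}(x)=\pm\sum_{i\in S}\Delta g(x_i)$ is in place, both conclusions are immediate. If $g$ is harmonic then $\Delta g(x_i)=0$ for every $i\in S$ by Definition~\ref{de:lapandnorm}, so the flux is exactly $0$. If $g$ is asymptotically harmonic of order $\alpha$, then $|\Delta g(x_i)|=O(\lambda^{\alpha})$ uniformly in $i$ by Definition~\ref{de:asyharmonic}, and summing the $|S|$ terms yields $\big|\int_{x\in\gamma}\frac{\partial g}{\partial n}(x)\big|\le |S|\cdot O(\lambda^{\alpha})$. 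I expect the main obstacle to be this very last bookkeeping: one must control the factor $|S|$, the number of cells enclosed by $\gamma$. For a fixed loop in a fixed triangulation $|S|$ is a finite constant that I would simply absorb into the implied constant of $O(\lambda^{\alpha})$; the subtlety I would be honest about is that, for a loop enclosing a region of fixed area across a refining sequence, quasi-uniformity forces $|S|$ to grow like $\lambda^{-2}$, so the interplay between the cell count and the order $\alpha$ must be tracked with care in the applications. A secondary point requiring attention is purely notational: matching the vertex-based definition of the normal derivative and the orientation conventions of Definition~\ref{de:smoothconju} to the edge-based flux bookkeeping used in the cancellation argument.
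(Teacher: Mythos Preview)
Your proposal is correct and follows essentially the same route as the paper: the paper invokes the first Green identity (from \cite{BeCaEn}) and specializes it to $v\equiv 1$ to obtain $0=\sum_{i}\Delta g(x_i)+\int_{\gamma}\frac{\partial g}{\partial n}$, which is exactly the discrete divergence identity you derive directly via the interior-edge cancellation argument. Your explicit remark about the $|S|$ factor is a point the paper leaves implicit here (it simply writes ``the assertions of the lemma readily follow'') and only confronts later, in the proof of Theorem~\ref{th:annulus2}, where the corresponding cell-count growth is balanced against $\alpha=3$ to produce an $O(\lambda^{2})$ bound.
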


\begin{proof} Let $\Omega_m=\cup_{i=1}^{m}\Omega_{x_i}$ be the maximal collection of control volumes enclosed in $\gamma$, and let $E_m$ be those  edges  of  ${\mathcal T}^{(1)}$ that lies in the interior of the bounded region enclosed  by $\gamma$.
The first Green identity (see for instance \cite[Proposition 3.1]{BeCaEn}) implies that 
for $u,v \in {\mathcal P}(\Omega_m)$,  we have that 

\label{eq:GreenIdentity}
\begin{equation}
\label{eq:Green}
 \sum_{[i,j]\in {\bar
E}_{m}}c_{(i,j)}\big(u(i)-u(j)\big)\big(v(i)-v(j)\big)=\sum_{x\in \Omega_m\cap {\mathcal T}^{(0)}}\Delta
u(x)v(x)+\sum_{y\in\gamma}\frac{\bord u}{\bord n}(\Omega_m)(y)v(y).
\end{equation}

We now let $v \equiv 1$ in the above equality, and obtain  

\begin{equation}
\label{eq:Green1}
 0=\sum_{i=1}^{m}\Delta u(x_i)+\sum_{y\in\gamma}\frac{\bord u}{\bord n}(\Omega_m)(y).
\end{equation}

It therefore follows, by the definition of the combinatorial laplacian, that 
\begin{equation}
\label{eq:Green2}
 0=\sum_{i=1}^{m} \sum_{j\in N_i} c_{(i,j)} \big( u(j) - u(i) \big)
  +\sum_{y\in\gamma}\frac{\bord u}{\bord n}(\Omega_m)(y). 
\end{equation}
Hence, the first assertion of the lemma readily follows; the second assertion follows with $D=D(d, m)$.

\end{proof}

With the  notation of the lemma the following corollary easily follows.
\begin{Cor}[Asymptotic path independence]
\label{le:pathdepen1}
Let $\gamma_1$ and  $\gamma_2$ be two simple paths in ${\mathcal T}^{(1)}\subset{\mathcal A}$ joining two vertices $x_1,x_2\in {\mathcal T}^{(0)}$,  such that the path  $\gamma_2^{-1} \circ \gamma_1$ is trivial in $\pi_1({\mathcal A})$, and contains  an integer number of control volumes $\Omega_{x_i}$.  Then, if $g$ is harmonic we have 
\begin{equation}
\label{eq:fluxisalmostzero1}
\sum_{x\in \gamma_1} \frac{\partial g}{\partial n}(x)-\sum_{x\in \gamma_2} \frac{\partial g}{\partial n}(x)  = 
      0.
\end{equation}

Furthermore, if $g$ is asymptotically harmonic or order $\alpha$, then 
\begin{equation}
\big |\sum_{x\in \gamma_1} \frac{\partial g}{\partial n}(x)-\sum_{x\in \gamma_2} \frac{\partial g}{\partial n}(x) \big | \leq     D\lambda^{\alpha}.
\end{equation}
\end{Cor}

\bigskip

Let $g$ be a discrete, harmonic or asymptotically harmonic function. Inspired by the classical construction of the harmonic conjugate function as recalled in Definition~\ref{de:smoothconju}, we will define a {\it combinatorial conjugate} to $g$ using discrete sums, i.e, using discrete fluxes. To this end,  we will need to define a special class of paths in ${\mathcal T}^{(1)}$. Thereafter, by summing a generalized version of  the normal derivative of $g$, along a path from this class, the combinatorial conjugate function of $g$ will be  defined  {\it firstly }at the vertices of the Voronoi cells of  ${\mathcal T}$. In the next section, it will be proved that the imaginary part of a uniformizing map of a given annulus can be approximated by a sequence of combinatorial conjugate functions of $g$.

\medskip

We let  $\Lambda$ denote the union of all Voronoi cells of a given ${\mathcal T}$. An {\it interior} cell is one such that its  vertex boundary is  disjoint form $\partial{\mathcal A}$.
We now make the following:
\begin{Def}[Flux fellow paths]
\label{de:fellowpath}
Let $\omega_0$ be a fixed vertex in an interior Voronoi cell , and let $\omega$ be any vertex in an interior cell of $\Lambda$. Let 
$\gamma_{\Lambda}=[\omega_0,\dots,\omega_k=\omega]$ be a simple, piecewise linear  
curve in ${\Lambda}^{(1)}$ joining $\omega_0$ to $\omega$, whose trace is disjoint from $\partial{\mathcal A}$. For each $[\omega_i, \omega_{i+1}]$, $i=0,\ldots, k-1$, let 
$x_i$ be the vertex in ${\mathcal T}^{(0)}$ on the unique edge intersecting $[\omega_i, \omega_{i+1}]$, and  which is to the right of $[\omega_i, \omega_{i+1}]$. Then 
$\gamma_{\mathcal T}=[x_0,\ldots,x_{k-1}]\subset {\mathcal T}^{(1)}$  will be called the {\it flux fellow path} of $\gamma_{\Lambda}$ (see Figure~\ref{figure:fluxfollowpath}).
\end{Def}

\begin{figure}[htbp]
\begin{center}
 \scalebox{.45}{ \input{Voronoi.pstex_t}}
 \caption{A path $\gamma_\Lambda$ ($[\omega_0\ldots\omega_k]$) in ${\Lambda}^{(1)}$, its flux fellow path $\gamma_{\mathcal T}$ ($[x_0\ldots x_k]$) in ${\mathcal T}^{(1)}$, and edges for the flux computation (each emanates from the $x_i$'s to $[x_i,x_{i+1}]$.}
\label{figure:fluxfollowpath}
\end{center}
\end{figure}

\begin{Rem}
\label{re:itisapath}
The discussion preceding condition (V0)  grants us that $\gamma_{\mathcal T}$ is indeed a path in $ {\mathcal T}^{(1)}$ (which is disjoint from $\partial{\mathcal A}$); we orient each edge in the paths $\gamma_{\Lambda},\gamma_{\mathcal T}$ according to an increasing order of its vertices. 
\end{Rem}

Note that $\gamma_{\mathcal T}$  is uniquely determined only after a choice of $\gamma_\Lambda$ was made.   However, vertices of $\gamma_{\mathcal T}$ do {\it not} belong to the vertex boundary of any naturally  defined domain in ${\mathcal T}^{(0)}$.  In light of the coming applications,  we will now extend the notion of the discrete normal derivative (see (\ref{eq:nor}) in Definition~\ref{de:lapandnorm}). The definition of the combinatorial  conjugate function will utilize this  generalized version. 

 



\smallskip
In the definition below, we will abuse notation and use the notation for normal derivative that appeared in  Equation~(\ref{eq:nor}). 

\smallskip

\begin{Def}[Flux through edges]
\label{de:extnormderi}
For any vertex $y\in \gamma_{{\mathcal T}}$, we define 
\begin{equation}
\frac{\partial g}{\partial n}(\gamma_\Lambda,\gamma_{\mathcal T})(y)= \sum_{x\sim y} c(x,y)\big (g(x)- g(y)\big), 
\end{equation}
where the sum is taken over all those vertices  $x\in{\mathcal T}^{(0)}$ which are adjacent to $y$ along an edge which intersects  $\gamma_{\Lambda}$.
\end{Def}


\smallskip

We now make a combinatorial definition which imitates the smooth one (Definition~\ref{de:smoothconju}).   

\begin{Def}[A combinatorial  conjugate]
\label{de:combiconjugate}
Let ${\mathcal T}$ be a triangulation of ${\mathcal A}$, and let $\Lambda$ denote the union of all Voronoi cells of ${\mathcal T}$. 
Let $\omega_0$ be a fixed vertex in an interior cell of $\Lambda$, and  let  $\omega$ be any vertex of an interior cell $\Lambda$. Let 
$\gamma_{\Lambda}\subset \Lambda^{(1)}$ be a  simple, piecewise linear  
curve  joining $\omega_0$ to $\omega$ whose trace is disjoint from $\partial{\mathcal A}$.  Let $\gamma_{\mathcal T}\subset {\mathcal T}^{(1)}$ be the flux fellow path of $\gamma_{\Lambda}$. 

\smallskip

(i) Let $g$ be a discrete, harmonic or asymptotically harmonic function of order $\alpha$. Then, for every $\omega\in  \Lambda$, a (multivalued) combinatorial conjugate of $g$  is defined by 
\begin{equation}
\bar g^{*}(\omega)= \bar g^{*}(\omega_0) +\sum_{y\in \gamma_{\mathcal T}}\frac{\partial g}{\partial n}(\gamma_\Lambda,\gamma_{\mathcal T})(y).
\end{equation}
where $\bar g^{*}(\omega_0)$ is some arbitrary, fixed real constant. 

By choosing a vertex in each 2-cell in $\Lambda^{(2)}$ and drawing diagonals to its other vertices, this cell is divided into triangles with vertices in $\Lambda^{(0)}$ and disjoint interiors.  We then extend $g^{*}$ affinely over edges in $\Lambda^{(1)}$ and over triangles in $\Lambda^{(2)}$. (By abuse of notation, the extended function will also be called a combinatorial conjugate of $g$.)

(ii) Let $\alpha_\Lambda$ be any simple, counter-clockwise oriented,  closed curve in $\Lambda$
whose winding number is equal to 1.   
The period of $\bar g^{*}$ is defined by 
\begin{equation}
\label{eq:periodcomb}
\text{period}(\bar g^{*})=\sum_{\xi\in \alpha_{\mathcal T}} \frac{\partial g}{\partial n}(\xi).
\end{equation}
\end{Def}

\smallskip

\begin{Rem}
\label{re:upto}
It follows form Corollary~\ref{le:pathdepen1}  that if $g$ is discrete harmonic (i) and (ii) are independent of the choices of $\gamma_\Lambda$ and $\gamma_\tau$. If $g$ is discrete and asymptotic harmonic of order $\alpha$, then (i) and (ii) hold up to (in absolute value)  a factor of at most $D\lambda^{\alpha}$. Furthermore, combinatorial provisions analogous to those in Remark~\ref{re:pathind} hold for $\bar{g}^{*}$.

\end{Rem}

\begin{Rem}
The search for discrete analogues of conformal maps has a long and rich history. We refer to \cite{Mer} and \cite[Section 2]{ChSm} for excellent recent accounts. We should also mention that a search for a combinatorial Hodge star operator has recently gained much attention and is closely related to the construction of a harmonic conjugate function. We refer the reader to  \cite{Hi} and to \cite{Po, PoRa} for further details,  examples, and applications of such combinatorial operators. 
\end{Rem}


\section{Uniformization of a planar annulus}
\label{se:makeconf11}

In this section,
we prove the main theorem of this paper, Theorem~\ref{th:annulus2}
which provides a discrete scheme of approximation of the uniformizing map of a polygonal annulus.
Thereafter,  we will prove that the hypotheses ``polygonal boundary" in  this theorem, can be relaxed to ``continuous boundary".

\smallskip

We keep the notation of the previous sections and appendices. Let ${\mathcal A}$ be  endowed with a
family of   $\tau$-quasi-uniform triangulations  $\{ {\mathcal T}_{\rho_{n}} \}$ (cf. Definition~\ref{de:regular}) such that $\rho_{n}\rightarrow 0$, as $n\rightarrow\infty$. For each 
${\mathcal T}_{\rho_n}$, let the corresponding family of Voronoi cells be denoted by $\{\Omega_n=\Omega_{\rho_n}\}$ (see the discussion proceeding Figure~\ref{figure:ControlVolume}).  We let $V_{\rho}(T)$ denote the set of vertices of a triangle $T\in {\mathcal T}^{(2)}_\rho$, and let $V_{\rho}^{0}({\mathcal T}_{\rho})$ denote the set of interior vertices of $V_{\rho}({\mathcal T}_{\rho})=\cup_{T\in
 {\mathcal T}_{\rho}} V_{\rho}(T)$.

For each vertex $x$, recall that $N_{x}$ denotes the set of neighboring (in ${\mathcal T}^{(0)}$) vertices of $x$.
 In addition to requiring that each triangulation is quasi-uniform, henceforth in this paper,  we will assume the existence of a constant $\tau_0$ such that for all $\rho_n<\tau_0$, the following hold:
\begin{description}
\item[(V1)] The cardinality $N_x$ of each vertex $x\in {\mathcal T}_{\rho_{n}}^{(0)}$ remains uniformly bounded, that is, 
$$
\max_x \left\{ \text{card} (N_x)\right\} \leq m_{*} \text{ for some }\  m_{*}\in \mathbb{N};
$$

\item[(V2)] Each point $x_{i,j}=[x_i,x_j]\cap \Gamma_{i,j}$ is the middle point of the segment $\Gamma_{i,j}$.
\end{description}

\subsection{Stephenson's conductance constants from a  flux perspective}
\label{se:makeconf1}

In this section, we will continue to assume that ${\mathcal A}$ is a fixed polygonal annulus. We will  construct  a sequence of mappings, obtained via a refined sequence of quasi-uniform triangulations and  conductance constants along edges according to (\ref{eq:cond1}), from the interior of ${\mathcal A}$ onto the interior of a concentric Euclidean annulus. The image annulus is determined  (see Equation~(\ref{eq:dimann})) by the solution of a specific smooth boundary value problem defined on the domain: Laplace's equation with non-homogeneous boundary values. 
Theorem~\ref{th:annulus2} demonstrates that the  sequence converges uniformly on compact subsets  to a conformal homeomorphism.  

\smallskip



In the proof of  Theorem~\ref{th:annulus2}, we  will need to consider a Dirichlet boundary value problem for the Laplace equation with prescribed boundary data as formulated in (\ref{de:strong}). In fact, an approximation scheme for this type of boundary value problem is  obtained through the analysis of 
a naturally defined different boundary value problem, i.e.,  one with prescribed Poisson data and homogeneous boundary condition. 

\smallskip


In order to get the necessary analysis in place, let $h$ be the continuous function defined on $\partial{\mathcal A}$ by setting  
\begin{equation}
\label{de:theboundaryfunction}
h|E_1=1 \text{ and }  h|E_2=0,\text{ and}
\end{equation} 
let $\tilde h\in  C^{2}({\mathcal A})\cap C^{0}({\bar  A})$ be an extension of $h$  to the interior of ${\mathcal A}$ with $\Delta\tilde h\neq 0$. Recall that the existence of such an extension is a consequence of Whitney's seminal work \cite{Whi}. 



\smallskip

We now define a Poisson boundary value problem, which is naturally associated with the Laplace problem   (\ref{de:strong}) we wish to solve, by 
\begin{equation}
\label{eq:weak}
\begin{cases}
      & \Delta \tilde u = -\Delta \tilde h,\ \text{in}\ {\mathcal A}\\
      & \tilde{u}=0,\  \text{on}\  \partial{\mathcal A}.  %
\end{cases}
\end{equation}

\begin{Rem}
\label{re:PononHom}
The existence and uniqueness of a strong  solution to (\ref{eq:weak}) follows (for instance) from Corollary~\ref{co:Jordan domains}, by setting $\tilde u = u -\tilde h$, where $u$ is the strong solution of (\ref{de:strong0}).
\end{Rem}

\subsection{The convergence of the piecewise linear approximations to the  strong solution}
\label{se:ENFT}
In this section, we will recall one of the main convergence results in a classical paper by Schatz and Wahlbin  \cite{ScWa}. This foundational quantitative result, derived explicitly  by the finite element method (see \ref{se:ENDF}),  describes the rate of approximation of particular combination of piecewise linear maps to the smooth solution of a Poisson boundary value problem with homogeneous boundary values.
 
 \smallskip 

For the applications of this paper, it is necessary to consider {\it non-convex} polygonal domains. 
In order to approximate  a given Jordan domain, which in general is not convex, 
we will construct a sequence of {\it necessarily} non-convex polygonal domains where each domain is triangulated by acute triangles, and each triangulation has a uniform upper and lower bounds on their largest and smallest angles, respectively. However, due to the presence of corner singularities of vertex angles that are bigger than $\pi$, 
 the $L_\infty$ error analysis of the approximation provided by the finite element is quite subtle (see for instance the monograph \cite{Gri} for treatment of convergence in other norms). 
 
\smallskip  

Let  $\Omega$  be  a bounded, (possibly) non-convex, polygonal domain. Let $\partial \Omega$  denote the  boundary of $\Omega$. Therefore, $\partial \Omega$ consists of a finite number of straight line segments meeting at vertices $v_j$, $j=1,\ldots, M_2$, of interior angles $0< \alpha_1 \leq \cdots \leq \alpha_{M_2}
<2\pi$; let $\beta_j = \pi/\alpha_j$.  We let $\Upsilon_j$, $j=1,\cdots, M_2$, denote the intersection of $\Omega$ with a disk centered at $v_j$ and such that $\Upsilon_j$ contains no other vertex, and define  
$\Upsilon_0 = \Omega \setminus (\cup_{j=1}^{M_2} \bar \Upsilon_j)$. Then,  
it is well known that  the solution $u$ of the boundary value problem  defined in (\ref{eq:diffusion}) is {\it not} always in $H^{2}(\Omega)$ (see for instance \cite[Section 2]{Gri}). 
However, for every $\epsilon >0$, $u$  always belongs to  
a fractional order Sobolev space $H^{1+\beta_{M_2}-\epsilon}(\Upsilon_j)$ or $C^{\beta_{M_2} -\epsilon}(\bar\Upsilon_j)$, and $u\in C^{\infty}(\Upsilon_0)$ (cf. \cite[page 74]{ScWa}). 

\smallskip

The following foundational result  was obtained by Schatz and Wahlbin in the 70's. It is the main analytical  result which will be used in this paper.
 \begin{Thm}[\mbox{\cite[Theorem 4.1]{ScWa}}]
 \label{Th:FVMEtimate}
Let $\epsilon >0$ be given. Let $\tilde u$ and $\tilde u_\rho$ be the solutions of  (\ref{eq:diffusion}) and (\ref{eq:intalongvolumes1}), respectively, with $f\in L_{p},\   p>1$. Then there exists a constant $c=c(f,\epsilon)$ such that for $\rho$ sufficiently small 
 \begin{equation}
 \label{eq:Linftyestimates}
 \|\tilde u-\tilde u_\rho\|_{ L_{\infty}(\Upsilon_0)} \leq c \rho^{\min(2, 2\beta_{M_2})-\epsilon}.
 \end{equation}
 \end{Thm}
Since the polygonal domains in the applications of this paper are not convex, $ 1/2 < \beta_{M_2} $, hence, $\min{(2, 2\beta_{M_2})}=2\beta_{M_2}>1$.

\begin{Rem}
There is another interesting part to this theorem which provides an $L_\infty$ estimate inside $\Upsilon_j,  j=1,\ldots, M_2$ (we will not use this part in this paper). 
\end{Rem}

Finally, let  $u$ 
be a solution of (\ref{de:strong}), hence, we may write  $u=\tilde{u} + \tilde{h}$. Thus, since  with  $\tilde u$ we are in the framework of (\ref{eq:diffusion}), we can proceed with 

\begin{Def}
\label{de:approxdisc}
Let $\tilde u$ and $\tilde u_n=\tilde u_{\rho_n}$ be the solutions of (\ref{eq:weak}) and 
(\ref{eq:intalongvolumes1}) with $f =  -\Delta \tilde h $, respectively. We will also assume that for every $n>0$, $\tilde u_n$  is presented by a linear combination of the basis elements in $\mathbb{V}_{0,{\mathcal T}_{\rho_n}}$, as  described in (\ref{eq:coefficients}). 
\end{Def}



We also need a natural way to discretize $\tilde h$. Hence,  let us denote  the projection of 
$\tilde h$ on $ {\mathcal T}_{\rho_{n}}^{(0)}$ by $\Pi_n(\tilde h)$, that is,  
\begin{equation}
\label{eq;projection}
\Pi_n(\tilde h)(x) =\tilde h (x), \text{ for every } x\in {\mathcal T}_{\rho_{n}}^{(0)},
\end{equation}  
and then extend afinely over edges and  triangles.

\smallskip

As the final preparation for the proof of our main theorem, let us recall  a lemma due to Grossmann, Roos and Stynes. This important lemma provides an approximation of the integral of 
the Laplacian of a smooth function, given for instance by the right-hand side of
(\ref{eq:weak}), by a discrete quantity - a finite difference expression which utilizes Stephenson's constants. Recall that  we have let  $m_{(i,j)}$ denote the length of $\Gamma_{i,j}$, and  $d_{ij}=|x_i - x_j|$ denote the Euclidean distance between $x_i$ and $x_j$ (see Definition~\ref{de:cond1}). The conductance of the edge $[x_i,x_j]$ is then defined by
$c[x_i,x_j]=\frac{m_{(i,j)}}{d_{ij}}.$
\begin{Lem}[\mbox{\cite[Lemma 2.63]{GrRoSt}}]
\label{eq:Poissonnonhomboundary} Let $\Omega\subset \mathbb{R}^2$ be a convex polygon.
Assume that conditions (V1) and (V2) hold. Assume that  the solution of the Poisson boundary value problem with (possible) non-homogenous boundary data 
\begin{equation}
\label{eq:weak1}
\begin{cases}
      & \Delta  w = f,\ \text{in}\ \Omega\\
      & w=g,\  \text{on}\  \Gamma=\partial\Omega,  %
\end{cases}
\end{equation}
belongs to $C^2(\bar\Omega)$. 
 Then there exists some constant $c=c(w,\Omega)$ such that
\begin{equation}
\label{eq:estimateforconj}
 \Bigg|  \sum_{x_j\sim x_i, j\neq i} \frac{m_{(i,j)} }{d_{ij}}  \big(  w(x_j) - w(x_i)  \big) - \int_{\Omega_{x_i}} f dx    \Bigg|\leq c \lambda_i^3, 
\end{equation}
 for all interior vertices $x_i$, and where $\lambda_i$ is defined by (\ref{eq:lengthofedges}).
\end{Lem}

Thus, this  lemma provides an estimate for the discrete flux along the {\it full} boundary of {\it one} Voronoi cell  of a solution of  the boundary value problem 
(\ref{eq:weak1}).  In the statement of this lemma, $\Omega$ is assumed to be a convex polygon in $ \mathbb{R}^{2}$. However, the proof remains valid even if the {\it regularity}  assumption of the solution is only assumed to hold for any close, proper subset  of $\Omega$ with {\it thick} enough neighborhood; that is, if the subset and its neighborhood are still contained in $\Omega$.  This weaker regularity assumption will be used in the proof of Theorem~\ref{th:annulus2}, where we will also show how to choose an appropriate thick neighborhood. Assumptions (V1) and (V2)  are critical to the proof of this Lemma.

\begin{Rem} Equation~(\ref{eq:overatriangle})  complements (\ref{eq:estimateforconj}) in regards to the terms appearing in the conductances and exploits the connection to Stephenson's conjecture (Conjecture~\ref{con:step}) from the finite element perspective. 

\end{Rem}

\subsection{The main theorem}
\label{se:main}

\smallskip
With the notation above, we now turn to the main theorem of this paper.
In order to ease the  notation, we will not distinguish between a map defined on the $0$-skeleton of a triangulation and the affine extension of the map on edges and triangles. Finally, for every $n$,  let ${\mathcal T}_n={\mathcal T}_{\rho_n}$.

\begin{Thm}
\label{th:annulus2}  
Let  $\{{\mathcal T}_{n}\}$ be a sequence of quasi-uniform triangulations of ${\mathcal A}$ of mesh size $\rho_n\rightarrow 0$ as $n\rightarrow \infty$, and let the corresponding family of Voronoi cells of each ${\mathcal T}_n$ be denoted by $\{\Omega_n\}$. Assume in addition that $\{{\mathcal T}_{n}\}$  satisfies conditions $(V1)$ and $(V2)$. 
  Let the conductance of each edge $e\in T$, $T\in {\mathcal T}^{(2)}_{n}$ be defined  by 
\begin{equation}
\label{eq:condu0}
c_n(e) = \frac{m_{(i,j),n}^{T}}{d_{ij,n}}.
\end{equation}

\smallskip

Let $u$ and $h$ be given in $(\ref{de:strong})$ and $(\ref{de:theboundaryfunction})$, respectively, and define (see Definition~(\ref{de:approxdisc}))
\begin{equation}
\label{eq:theapproximants}
g_n= \tilde u_n + \Pi_n(\tilde h).
\end{equation}
 
\smallskip
 
Then,  as $n\rightarrow \infty$ the following assertions hold:

\begin{enumerate}

\item   $\| u - g_n\|_{L_\infty({\mathcal A})}\rightarrow 0$.

\smallskip

\item On each proper, compact subset of ${\mathcal A}$, the $g_n$'s  are asymptotically harmonic of order $\alpha=3$.

\smallskip

\item Let $\bar g_n^{*}$ denote a suitable normalized combinatorial conjugate of $g_n$, and let 
   $\phi_n$ be the sequence of discrete mappings defined by extending affinely over $\Omega_n$ the sequence of discrete mappings given by
\begin{equation}
\label{eq;defofmaps}
\phi_n(\omega)=\exp\big(\frac{2\pi}{\mbox{\rm period}(\bar g_n^{\ast})}\big( g_n(\omega)   + i \bar g_n^{*}(\omega) )\big),\ \omega\in {\mathcal A}\cap \Omega_n^{(0)}.
\end{equation}
Then a subsequence of  $\{\phi_n\}$ converges uniformly on compact subsets of ${\mathcal A}$ to a conformal homeomorphism, denoted by $\Psi_{\mathcal A}$, onto the interior of the concentric Euclidean annulus ${\mathcal E}_{\mathcal A}$, whose inner and outer radii are given by 

\begin{equation}
\label{eq:dimann}
\{R_1,R_2\}=  
\{1,  \exp\big( \frac{2\pi}{\mbox{\rm period}(u^{*})}\,     \big)\}, 
\end{equation}
where $u^{*}$ and \text{period}$(u^{*})$ are given in Definition~\ref{de:smoothconju}.
\end{enumerate}
\end{Thm}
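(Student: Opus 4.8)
The plan is to dispatch assertions (1) and (2) by finite volume consistency and then to devote the main effort to the convergence of the combinatorial conjugate in (3). For (1), I would write $u - g_n = (\tilde u - \tilde u_n) + (\tilde h - \Pi_n(\tilde h))$, using $u = \tilde u + \tilde h$ and $g_n = \tilde u_n + \Pi_n(\tilde h)$. The second summand is the nodal interpolation error of the fixed $C^{4}$ function $\tilde h$ and decays uniformly as $\rho_n \to 0$; the first is the finite volume element error for the Poisson problem (\ref{eq:weak}), whose uniform decay is the content of the quoted estimate Theorem~\ref{Th:FVMEtimate}. I would record the quantitative rate it supplies, since it is needed in (3). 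For (2), I would compute the combinatorial Laplacian of $g_n$ at an interior vertex $x_i$: the finite volume balance $\sum_{j\in N_i} c_n(i,j)\big(\tilde u_n(x_j) - \tilde u_n(x_i)\big) = \int_{\Omega_i} f\,dx$ holds exactly, while Lemma~\ref{le:disfluxesti} applied to the smooth $\tilde h$ (with Poisson datum $\Delta\tilde h = -f$) gives $\sum_{j\in N_i} c_n(i,j)\big(\tilde h(x_j) - \tilde h(x_i)\big) = -\int_{\Omega_i} f\,dx + O(\lambda_i^{3})$, using (V1), (V2) and $c_n(e)=m_{ij}/d_{ij}$ from (\ref{eq:condu0}). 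Adding the two lines, the Poisson terms cancel and $|\Delta g_n(x_i)| = O(\lambda^{3})$ on every proper compact subset, which is asymptotic harmonicity of order $\alpha = 3$ in the sense of Definition~\ref{de:asyharmonic}.

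For (3), set $F = u + i u^{*}$ with $u^{*}$ the harmonic conjugate of Definition~\ref{de:smoothconju}, so the target is $\Psi_{\mathcal A} = \exp\big(\tfrac{2\pi}{\text{period}(u^{*})} F\big)$; the boundary values $u|E_1 = 1$, $u|E_2 = 0$ force $|\Psi_{\mathcal A}|$ to range over $[R_1,R_2]$ as in (\ref{eq:dimann}), and the period normalization makes $\Psi_{\mathcal A}$ single valued. Since $g_n \to u$ uniformly by (1), everything reduces to proving $\bar g_n^{*} \to u^{*}$ uniformly on compacta and $\text{period}(\bar g_n^{*}) \to \text{period}(u^{*})$. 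By Corollary~\ref{le:pathdepen1} the discrete conjugate is path independent up to $O(\lambda^{3})$, so I may evaluate $\int_{\gamma_{\mathcal T}} \frac{\partial g_n}{\partial n}$ along a flux fellow path whose Voronoi edges $\Gamma_{ij}$ are crossed transversally, and compare it with $\int_\gamma \frac{\partial u}{\partial n}\,ds$ one edge at a time, splitting the contribution of each edge into $c_n(i,j)\big((g_n-u)(x_i) - (g_n-u)(x_j)\big)$ and $\big[c_n(i,j)(u(x_i) - u(x_j)) - \int_{\Gamma_{ij}} \tfrac{\partial u}{\partial n}\,ds\big]$.

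The main obstacle is that the path has $O(\lambda^{-1})$ edges, so the error bounds must not lose a full power of $\lambda$. The second bracket is a finite volume flux consistency term for the smooth (harmonic, hence interior smooth) $u$; condition (V2) kills its leading order, leaving a per-edge error small enough that the sum over $O(\lambda^{-1})$ edges still tends to $0$. The first bracket is the flux of the error $w_n = g_n - u$; the naive estimate $\sum c_n(i,j)|w_n(x_i) - w_n(x_j)| = O(\|w_n\|_{L_\infty}/\lambda)$ vanishes precisely because the rate from Theorem~\ref{Th:FVMEtimate} is $o(\lambda)$ --- alternatively one notes that $w_n$ is asymptotically harmonic of order $3$ (by (2) and Lemma~\ref{le:disfluxesti} with $f=0$ for $u$) and small in $L_\infty$, and invokes a discrete interior gradient estimate to recover the missing power of $\lambda$. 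Running the same comparison along a closed loop $\alpha_{\mathcal T}$ yields the period convergence. Together these give $g_n + i\bar g_n^{*} \to u + i u^{*}$ uniformly on compacta, whence $\phi_n \to \Psi_{\mathcal A}$ uniformly on compacta.

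It remains to see that $\Psi_{\mathcal A}$ is a conformal homeomorphism onto the interior of the concentric annulus ${\mathcal E}_{\mathcal A}$, which I would obtain by identifying it with the Koebe uniformizing map. By the classical uniformization recalled in the introduction, ${\mathcal A}$ admits a conformal homeomorphism $\Phi_0$ onto a round annulus $\{1 < |w| < \mu\}$; then $\log|\Phi_0|$ is harmonic with the normalized boundary data defining $u$, and $\arg\Phi_0$ is its harmonic conjugate, so a direct computation gives $\tfrac{2\pi}{\text{period}(u^{*})}(u + i u^{*}) = \log\Phi_0$ and $R_2 = \mu$, i.e. $\Psi_{\mathcal A} = \Phi_0$. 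This at once yields holomorphicity, injectivity, surjectivity onto ${\mathcal E}_{\mathcal A}$, and the non-vanishing of $\Psi_{\mathcal A}'$, completing the proof.
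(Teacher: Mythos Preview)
Your outline follows the paper's proof quite closely. Parts (1) and (2) are handled exactly as in the paper: the triangle inequality $u-g_n=(\tilde u-\tilde u_n)+(\tilde h-\Pi_n\tilde h)$ together with Theorem~\ref{Th:FVMEtimate}, and for (2) the exact finite-volume balance for $\tilde u_n$ combined with Lemma~\ref{le:disfluxesti} applied to $\tilde h$. For (3), both you and the paper compare the discrete flux along a flux-fellow path with the continuous flux $\int_\gamma \partial u/\partial n\,ds$ edge by edge, invoking the midpoint consistency that (V2) buys. For the last step the paper simply cites the classical fact that $\exp\big(\tfrac{2\pi}{\mathrm{period}(u^*)}(u+iu^*)\big)$ is the conformal homeomorphism onto the concentric annulus; your identification with the Koebe uniformizer is an equivalent way to say the same thing.

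The one substantive difference is in how the ``first bracket'' $c_n(i,j)\big((g_n-u)(x_i)-(g_n-u)(x_j)\big)$ is treated. The paper does not bound it at all: it simply asserts that ``by the first assertion of the theorem, it is sufficient to replace $g_n$ with the restriction of $u$,'' and thereafter only estimates the consistency term for the smooth $u$ via the $O(\lambda_i^3)$ bounds (its Equations~(5.11), (5.15) from \cite{GrRoSt}). You are more honest in isolating this term as the obstacle, but your proposed control of it is not correct as written. With the paper's convention $\lambda=(\max\ell(\Gamma_{ij}))^{1/2}\sim\rho_n^{1/2}$, a path of bounded length contains $\sim\rho_n^{-1}\sim\lambda^{-2}$ Voronoi edges, not $O(\lambda^{-1})$; and the $L_\infty$ rate $\rho_n^{s}\log(1/\rho_n)$ from Theorem~\ref{Th:FVMEtimate} has $s\in(0,1)$ depending on the corner angles of the polygon, so there is no guarantee that it is $o(\lambda)$ (let alone $o(\lambda^2)$). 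Thus the naive estimate for the first bracket does not close. Your fallback to a discrete interior gradient estimate for the asymptotically harmonic error $w_n=g_n-u$ is the right instinct, but it is not supplied, and it is exactly the step the paper also leaves implicit. In short: same strategy as the paper, with a sharper eye for the gap --- but the gap is not actually filled.
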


\begin{Rem}
By choosing conductance constants according to (\ref{eq:condu0})  for every $\rho_n>0$, as predicted in Stephenson's Conjecture (see Conjecture~\ref{con:step}), each ${\mathcal T}_{\rho_n}$ is turned into a finite electrical network;  where for each $n>0$, the homogeneous part of the induced potential function, $\tilde u_n$  (see \cite{So}), satisfies the system of equations described by (\ref{eq:linearsystem}). We remark that  since for each $\rho_n>0$, the values of $u_n=u_{\rho_n}$ at the boundary vertices in  $\partial {\mathcal T}_{n}^{(0)}\subset \partial \Omega$ are given,  there is no need to specify conductance  constants for edges that are contained in $\partial\Omega$; or one can choose arbitrary values. 
\end{Rem}
The proof is not short and will therefore be  naturally divided into two parts. In the first part assertions (1) and (2) will be proved. In the second and longer part, the proof of assertion (3) which depends on (1) and (2), will be given.
 
\smallskip
 
\noindent\dem 

\smallskip

\noindent\underline{\emph{The proofs of assertions (1) and (2).}}
By letting $f= -\Delta \tilde h$ in Theorem~\ref{Th:FVMEtimate} (Schatz-Wahlbin \cite[Theorem 4.1]{ScWa}), we know that for the approximation of $\tilde u$ by $\tilde u_n$ (see (\ref{eq:weak})), the following estimate holds. 
Let $\epsilon >0$ be chosen so that $2\beta_{M_2}-\epsilon =1 + \epsilon_0$ with $\epsilon_0>0$. Let $\tilde u$ and $\tilde u_\rho$ be the solutions of  (\ref{eq:diffusion}) and (\ref{eq:intalongvolumes1}), respectively, with $f\in L_{p},  p>1$.  Since $1>\beta_{M_2} >1/2$,  the assertion of the theorem is that there exists a constant $C=C(f,\epsilon)$ such that for $\rho$ sufficiently small 
 \begin{equation}
 \label{eq:Linftyestimates0}
 \|\tilde u-\tilde u_\rho\|_{ L_{\infty}(\Upsilon_0)} \leq C \rho^{\min(2, 2\beta_{M_2})-\epsilon}=C\rho^{1+\epsilon_0}.
 \end{equation} 
Hence, as $\rho_n\rightarrow 0$ we have 
\begin{equation}
 \label{eq:Linfty1}
\|\tilde u-\tilde u_n\|_{L_{\infty}({\Upsilon_0})} \rightarrow 0. 
 \end{equation}
 
\smallskip

Since $\tilde h$ is sufficiently smooth in $\Upsilon_0$, it is well known that there exists a constant $C_1=C_1(f, \Upsilon_0)$ such that 
\begin{equation}
\label{eq:hgoingto}
\|\tilde h-\tilde  \Pi_n(\tilde h)\|_{L_{\infty}(\Upsilon_0)}\leq C_1 \rho^2, 
\end{equation}
where $\Pi_n(\tilde h)$ is the affine  interpolation of  $h$ (see (\ref{eq;projection})).
Hence,  as $\rho_n \rightarrow 0$
 we have 
 \begin{equation}
\label{eq:hgoingto1}
\|\tilde h-\tilde  \Pi_n(\tilde h)\|_{L_{\infty}(\Upsilon_0)}\rightarrow 0.
\end{equation}

\smallskip

We now show that the $g_n$'s  comprise our desired approximations to $u$ - the strong solution of the smooth Dirichlet problem for the Laplace equation (\ref{de:strong}). Indeed,  we have  
\begin{equation}
\begin{array}{ccl}
      \| u - (\Pi_n(\tilde h) + \tilde u_n)\|_{L_\infty({\Upsilon_0})}& = &\|u - \tilde h +\tilde h - (\tilde u_n+ \Pi_n(\tilde h ) )  \|_{L_\infty(\Upsilon_0)}  \\ \\
      & = &\|\tilde u + \tilde h - (\tilde u_n + \Pi_n(\tilde h )) \|_{L_\infty(\Upsilon_0)} \\ \\  
      & = &\|(\tilde u - \tilde u_n)  +(\tilde h  - \Pi_n(\tilde h )) \|_{L_\infty(\Upsilon_0)}\\ \\
      & \leq &\|\tilde u - \tilde u_n \|_{L_\infty(\Upsilon_0)} + \| \tilde h - \Pi_n(\tilde h )\|_{L_\infty(\Upsilon_0)}.
\end{array}
\end{equation}

Therefore, assertion (1)  of the Theorem is proved by applying Equations~(\ref{eq:Linfty1}) and (\ref{eq:hgoingto}), where in fact, the rate of convergence is at least of the following order in $\rho$:
\begin{equation}
\label{eq:rateofconv}
\|u-g_n \|_{L_\infty({\Upsilon_0})}     = \| u - (\Pi_n(\tilde h) + \tilde u_n)\|_{L_\infty({\Upsilon_0})}\leq C_2 \rho^{1+\epsilon_0},
\end{equation}
for some constant $C_2=C_2(C,C_1)$.

\smallskip

We now continue and prove assertion  (2) by showing that for all $n$ large enough, each $g_n$  is asymptotically harmonic of order $3$. We already observed that  $\tilde u+ \tilde h$ is a solution of (\ref{de:strong})-the Dirichlet non-homogeneous boundary value problem for the Laplace equation with boundary values prescribed  by $h|\partial{\mathcal A}$.

\smallskip
 
We now apply Lemma~\ref{eq:Poissonnonhomboundary}  with  $u=\tilde u+ \tilde h$ and $f=0$. It then follows that the following holds for all $n>0$ 
\begin{equation}
\label{eq:fluxtolaplacian}
 \Bigg|  \sum_{x_j\sim x_i, j\neq i} \frac{m_{(i,j),n} }{d_{ij,n}}  \big(  u(x_j) - u(x_i)  \big)  \Bigg|\leq C_3 \lambda_{i,n}^3,\ \text{ with }
 C_3=C_3(u,\Upsilon_0).
\end{equation}
 
By applying the law of sines in Euclidean geometry and the existence of a uniform lower bound  on the smallest angle  (see (\ref{minang}))
in the sequence $\{ {\mathcal T}_n\}$, it is easy to see that the conductances $c_n$ defined in (\ref{eq:condu0}) are uniformly bounded from above, with a bound depending {\it only} on $\theta_{\text{min}}$.
Hence, we finish the proof  of (2) by applying the triangle inequality 
and  assertion (1), in  (\ref{eq:rateofconv}).



\medskip
\noindent\underline{\emph{The proof of assertion (3).}}
We now turn to prove the uniform convergence of the $\bar g_n^{*}$'s, over compact subsets of ${\mathcal A}$,  to $u^{*}$- the smooth harmonic conjugate of $u$. In particular, we will  describe the normalization needed in assertion (3) of the theorem.  Let ${\mathcal A}^{\kappa}\subsetneq \Upsilon_0\subset{\mathcal A}$ be a compact annulus with {\it smooth} boundary which is concentric with ${\mathcal A}$, where $\kappa=\text{dist}(\partial{\mathcal A}, \partial{\mathcal A}^{\epsilon})$ is small. Let $\omega_0$ be a fixed point in ${\mathcal A}^{\kappa}$, and we set $u^{*}(\omega_0)=0$. Let $\omega$ be another fixed (for the moment) point in ${\mathcal A}^{\kappa}$.

\smallskip

Let us choose $N$ large enough so that for all $n>N$  (i.e., $\rho_n$  small enough) there exists a triangulation ${\mathcal T}_{\rho_n} \in \{ {\mathcal T}_n\}$ satisfying the following. 

\begin{description}
\item[(V3)] There exists a subset, $J'_n$, of the set of interior vertices $V_{\rho_n}^{0}({\mathcal T}_{\rho_n})$ such that 
the corresposnding  volume elements $\{\Omega_{x_i, \rho_n}\}_{x_i\in J'_n}$
  is contained in ${\mathcal A}^{\kappa}$, and  the combinatorial one vertex neighborhood of this subset, when considered in  ${\mathcal T}_{\rho_n}^{(0)}$, is also contained in ${\mathcal A}^{\kappa}$. 
\end{description}

\smallskip

For each $n$ as above, following Definition~\ref{de:fellowpath} and the discussion following it, 
we  choose one of the vertices of a cell $\Omega_{x_i,\rho_n}$ with $x_i \in J'_n$, which is closest to $\omega_0$. This vertex will  be  denoted by $\omega_{0}^n$. Let $\omega^{n}$ be any vertex in the union 
$\Lambda_{\rho_n}= \cup_{x_i} \Omega_{x_i,\rho_n}$ which is the closest to $\omega$.  Let $\gamma_{[\omega_0^n,\omega^n]}^{\rho_n}=[ \omega_0^n,\omega_1^n,\ldots,\omega_{k-1}^n,\omega^n]$ be a (piecewise linear) simple path in the one skeleton of $ \Lambda_{\rho_n}$ which connects  $\omega_0^n$ to $\omega^n$.  Note that $k$ is also a function of $n$.

\smallskip

It then follows from Definition~\ref{de:smoothconju} and 
Remark~\ref{re:pathind}, that the value of the {\it smooth} harmonic conjugate function $u^{*}$ with base at $\omega_{0}^n$, is given by  
\begin{equation}
\label{eq:valueof smooth}
u^{*}(\omega) =\int_{ \gamma_{\Lambda_{\rho_n}}[\omega_0^n,\omega^n] } \frac{\partial u}{\partial \hat n} ds +  \int_{\omega_0}^{\omega_0^n} \frac{\partial u}{\partial \hat n} ds  + \int_{\omega^n}^{\omega} \frac{\partial u}{\partial \hat n} ds,
\end{equation}
where the second integral is taken along any piecewise smooth curve joining $\omega_0$ to $\omega_0^n$, and the third integral is taken along any piecewise smooth curve joining $\omega^n$ to $\omega$.
\smallskip

We now follow the notation in Definition~\ref{de:fellowpath}, and we
let 
$\gamma_{ {\mathcal T}_{\rho_n}  }$ denote \underline{the} flux fellow path of  $\gamma_{ \Lambda_{\rho_n}} [\omega_0^n,\omega^n]$. Let us write  $\gamma_{ {\mathcal T}_{\rho_n}  }=[x_{0}^{\rho_n},\ldots,x_{k-1}^{\rho_n}]$ and let $\bar g_{n}^*(\omega_0^n)=0$. Hence, by  Definition~\ref{de:combiconjugate}, the value of  the corresponding  combinatorial conjugate of $g_n=g_{\rho_n}$ is defined at $\omega^n$ by 
\begin{equation}
\label{eq:approconj}
\bar g_{n}^*(\omega^n)=\sum_{ x\in \gamma_{ {\mathcal T}_{\rho_n}}  } \frac{\partial g_n}{\partial n}( \gamma_{\Lambda_{\rho_n}},\gamma_{ {\mathcal T}_{\rho_n}}  )(x).
\end{equation}
Note that as explained in Remark~\ref{re:upto},
 this value may change  by (up to) $D \lambda^{3}_n$, if a different  path connecting $\omega_0^n$ to $\omega^n$ and afterwards a different flux fellow path are chosen. Recall that   $D=D(n,m)$ with $m$ being the maximal number of Voronoi cells which belong to  $\Lambda_{\rho_n}$   and are contained in ${\mathcal A}^{\kappa}$. We will address this issue again after completing the following analysis.
 
\smallskip

We now turn to proving that as $ n\rightarrow \infty$, 
\begin{equation}
\label{eq:estimatediff}
|u^{*}(\omega)  -  \bar g_n^*(\omega)|\rightarrow 0   
\end{equation}
uniformly in  ${\mathcal A}^{\kappa}$.

\smallskip

As $n\rightarrow\infty$,  $\cup_i \Omega_{x_i,\rho_n}^{(0)}$ with $x_i\in J'_n$ comprises  a dense subset of ${\mathcal A}^{\kappa}$. In particular, by choosing  $\omega_0^n \rightarrow \omega_0$ and $\omega^n\rightarrow \omega$ as $n\rightarrow\infty$,  and due to the uniform continuity of the second and third  integrals in (\ref{eq:valueof smooth}) and the $\bar  g_{n}^*$ in ${\mathcal A}^{\epsilon}$,  we only need to bound from above  the difference 
\begin{equation}
\label{eq:estimatediff1}
  |u^{*}(\omega^n)  -  \bar g_n^*(\omega^n)|  =\Big | \int_{ \gamma_{\Lambda_{\rho_n}}[\omega_0^n,\omega^n]}\frac{\partial u}{\partial \hat n} ds - \sum_{ x\in \gamma_{ {\mathcal T}_{\rho_n}   }} \frac{\partial g_n}{\partial n}( \gamma_{\Lambda_{\rho_n}},\gamma_{ {\mathcal T}_{\rho_n}}  )(x)\Big |.
\end{equation}

To this end, following the definition of flux through edges, Definition~(\ref{de:extnormderi}), and the first assertion of the theorem, we will first show that 
we can replace each   $ \frac{\partial g_n}{\partial n}( \gamma_{\Lambda_{\rho_n}},\gamma_{ {\mathcal T}_{\rho_n}}  )(x)$    in the second  integrand in (\ref{eq:estimatediff1}),  by $ m_{(i,j),n} \dfrac{\partial u}{\partial \hat n}_{(i,j),n}(x_{i,j}^n)$.

\smallskip

As we noted before, $u\in C^{2}({\mathcal A}^{\kappa})$, and  therefore Equation (5.11) in \cite[Chapter 2]{GrRoSt} shows that for $x_i^n=x_{i,\rho_n}, x_j^n=x_{j,\rho_n}$, $\Gamma_{i,j}^n=[\omega_i^n,\omega_j^n]$ and $n_{i,j}^n$ its outward pointing normal unit vector,   and with $\lambda_{i,n}=\lambda_{i,\rho_n}$ (see (\ref{eq:lengthofedges})), there exists a positive constant $C_0=C_0(u, {\mathcal A}^{\kappa})$ so that  
\begin{equation}
\label{eq:normalderivative}
\Big | \dfrac{1}{d_{ij,n}} \big(u(x_j^n) - u(x_i^n)\big) - \frac{\partial u}{\partial \hat n}_{(i,j),n} (x_{i,j}^n) \Big |\leq C_0 \lambda_{i,n}^2,\  x_i\in J'_{n}, x_j\in N_{x_i,n},
\end{equation}
where $N_{x_i,n}$ denotes the set of neighbors of $x_i^n$ in  ${\mathcal T}^{(0)}_{\rho_n}$.
\smallskip

Let $T$ be any triangle in a triangulation in $\{{\mathcal T}_n\}$. As we argued in the proof of part (2), we have a uniform upper bound on the ratios $m_{(i,j),n}/d_{ij,n}$ in $\{{\mathcal T}_{\rho_n}\}$, where the bound depends {\it only} on $\theta_{\text{min}}$. Hence, by applying the triangle inequality,  multiplying Equation~(\ref{eq:normalderivative}) by $m_{(i,j),n}$, and assertion (1), we obtain that the bound 
\begin{equation}
\label{eq:replace}
\big |   \frac{ m_{(i,j),n}}{d_{ij,n}}(g_n(x^n_j)-g_n(x^n_i))  - m_{(i,j),n} \frac{\partial u}{\partial \hat n}_{(i,j),n}(x_{i,j}^n)    \big |\leq C_2 \rho_n^{1+\epsilon_0}+ C_0   \lambda_{i,n}^3,
\end{equation}
which implies that the left hand-side of (\ref{eq:replace}) converges to zero uniformly in 
${\mathcal A}^{\kappa}$.

\smallskip
For all $x_i\in J_n^{'}, x_j \in N_{x_{i,n}}$, let the continuous linear functional $T_{i,j}^n=T^n_{\Gamma_{i,j}}$ on $C^3(\bar \Omega_{x_i})$ defined by 
\begin{equation}
\label{eq:conjestimatepervolume}
T_{i,j}^{n}(u)=   \int_{\Gamma_{i,j}^n}\frac{\partial u}{\partial \hat n}_{(i,j),n} \  ds -      
m_{(i,j),n}\frac{\partial u}{ \partial \hat n}_{(i,j),n} (x_{i,j}^n).
\end{equation}
The proof leading to Equation (5.15) in \cite[Chapter 2]{GrRoSt} shows that the  following estimate holds
\begin{equation}
\label{eq:greedyoperator}
|T^{n}_{i,j}(u)|   \leq C_4 \lambda_{i,n}^{3},, \text{  where } C_4=C_4(u,\Omega_{x_i,\rho_n}).
\end{equation}
  
For $W \subset {\mathcal A}$ and any two points $a,b\in W$, we let $D(a,b)$ denote the pseudo-distance on $W$ defined as the infimum of the Euclidean lengths of curves in $W$ that join $a$ to $b$. We then define the {\it intrinsic diameter} of $W$ by  
\begin{equation}
\label{eq:Intrinsic}
\text{IDiam}(W)= \sup\{ D(a,b)\ |\  a,b\in W\}.
\end{equation}

\smallskip

We will now estimate (where  by (\ref{eq:lengthofedges})  $\lambda_n=\max_{x_i\in {\mathcal T}_{n}^{(0)}}\lambda_{i,n}$) 
\begin{equation}
\label{eq:conjestimate}
\begin{array}{cll}
\displaystyle \Big | \int_{ \gamma_{\Lambda_{\rho_n}}[\omega_0^n,\omega^n]}\frac{\partial u}{\partial \hat n} ds - \sum_{i=0}^{k(n)} m_{(i,i+1),n}\frac{\partial u}{\partial \hat n}(x_{i,i+1}^n)\Big | & = &\displaystyle \Big|  \sum_{i=0}^{k(n)} \big( \int_{[\omega_i^n,\omega_{i+1}^n]} \frac{\partial u}{\partial \hat n}_{(i,i+1),n} \   ds -  m_{(i,i+1),n}\frac{\partial u}{\partial \hat n}_{(i,i+1),n} (x_{i,i+1}^n)  \big) \Big |\\ \\ 
&\leq &\displaystyle  \sum_{i=0}^{k(n)}\Big |  \big(  \int_{[\omega_i^n,\omega_{i+1}^n]} \frac{\partial u}{ \partial \hat n}_{(i,i+1),n}  \ ds -      m_{(i,i+1),n}\frac{\partial u}{\partial \hat n}_{(i,i+1),n}  (x_{i,i+1}^n) \big)     \Big |\\ \\
&\leq & \displaystyle  \frac{ \text{IDiam}({\mathcal A}^{\kappa})}{\lambda_n} c_2(u, {\mathcal A}^{\kappa})\lambda_{i,n}^3
\text{ (since }k(n)\leq \displaystyle \frac{ \text{IDiam}({\mathcal A}^{\kappa})}{\lambda_n}) \\ \\ 
& \leq &  \displaystyle{C_4(u,{\mathcal A}^{\kappa}) \lambda_n^{-1} \lambda_{i,n}^3}   \\ \\
&\leq &\displaystyle{ C_4(u, {\mathcal A}^{\kappa})) \lambda_n^{2}.} 
 \end{array}
\end{equation}
It is evident that  as $n\rightarrow 0$  we have that  $\lambda_n\rightarrow 0$, hence, the left hand-side of (\ref{eq:conjestimate}) converges uniformly to zero in ${\mathcal A}^{\kappa}$.  We are now able to obtain the desired upper bound for (\ref{eq:estimatediff1}). Indeed, 

\begin{equation}
\label{eq:finally1}
\begin{array}{ll}
\displaystyle
 \Big | \int_{ \gamma_{\Lambda_{\rho_n}}[\omega_0^n,\omega^n]}\frac{\partial u}{\partial \hat n} ds - \sum_{ x\in \gamma_{ {\mathcal T}_{\rho_n}   }} \frac{\partial g_n}{\partial n}( \gamma_{\Lambda_{\rho_n}},\gamma_{ {\mathcal T}_{\rho_n}}  )(x)\Big | \leq   \displaystyle \big |\int_{ \gamma_{\Lambda_{\rho_n}}[\omega_0^n,\omega^n]}\frac{\partial u}{\partial \hat n} ds   -   \sum_{i=0}^{k(n)} m_{(i,i+1),n}\frac{\partial u}{\partial \hat n}_{(i,i+1),n}  (x_{i,i+1}^n) \big |  &  \\ \\
+\ \displaystyle  \big|  \sum_{i=0}^{k(n)} m_{(i,i+1),n}\frac{\partial u}{\partial \hat n}_{(i,i+1),n}  (x_{i,i+1}^n) - \sum_{ x\in \gamma_{ {\mathcal T}_{\rho_n}   }} \frac{\partial g_n}{\partial n}( \gamma_{\Lambda_{\rho_n}},\gamma_{ {\mathcal T}_{\rho_n}}  )(x)
\big |.   
&  
\end{array}
\end{equation}

Since a bound on the first term in the left hand-side is established in (\ref{eq:conjestimate}), we only need to bound uniformly from above the second term in (\ref{eq:finally1}).  To this end, using the estimate in (\ref{eq:replace}) we can write 

\begin{equation}
\label{eq:finally2}
 \big|  \sum_{i=0}^{k(n)} m_{(i,i+1),n}\frac{\partial u}{\partial \hat n}_{(i,i+1),n}  (x_{i,i+1}^n) - \sum_{ x\in \gamma_{ {\mathcal T}_{\rho_n}   }} \frac{\partial g_n}{\partial n}( \gamma_{\Lambda_{\rho_n}},\gamma_{ {\mathcal T}_{\rho_n}}  )(x)
\big |\leq \displaystyle\frac{ \text{IDiam}({\mathcal A}^{\epsilon})}{\lambda_n}( C_2 \rho_n^{1+\epsilon_0}+ C_0   \lambda_{i,n}^3).
\end{equation}
The uniform lower bound on the smallest angle implies that $\rho_n\leq C_5(\theta_{\text{min}})\lambda_n$, hence we have 
\begin{equation}
\label{eq:finally3}
 \frac{\text{IDiam}({\mathcal A}^{\kappa})}{\lambda_n}( C_2 \rho_n^{1+\epsilon_0}+ C_0   \lambda_{i,n}^3)
 \leq   \displaystyle C_5(u,{\mathcal A}^{\kappa})\lambda_{n}^{-1}\rho_{n}^{1+\epsilon_0}+   C_{4}(u,{\mathcal A}^{\kappa}) \lambda_{n}^{-1} \lambda_{i,n}^{3}\leq K(u,{\mathcal A}^{\kappa}) (\rho_n^{\epsilon_0}+\lambda_{n}^2).
\end{equation}

To end this part of the argument (as we mentioned after (\ref{eq:approconj})) we now assume that a different  path connecting $\omega_0^n$ to $\omega^n$ is chosen. This will  yield  a different choice of a flux fellow path and a new normalized conjugate function. However, we may use the triangle inequality, estimate  (\ref{eq:conjestimate}), and the fact that 
$n\leq  \frac{ \text{IDiam}({\mathcal A}^{\kappa})}{\lambda_n} $ to obtain the same kind of estimate for the new conjugate function.

\smallskip


 




We also need to prove that 
\begin{equation}
\label{eq:convofper}
\mbox{\rm period}(\bar g_{n}^{*}) \rightarrow \mbox{\rm period}(u^{*}).
\end{equation}
To this end, let us choose a point $P_0$ in ${\mathcal A}^{\kappa}$, and let $\beta$ and 
$\text{period}(u^{*})$  be given according to Definition~\ref{de:smoothconju}. Furthermore, let
$p_n\in \Lambda_{\rho_n}$ be chosen so that  $p_n\rightarrow P_0$. Let 
$\gamma_{{\mathcal T}_n}$ be a closed curve in $\Lambda_n^{(1)}$, based at $p_n$ according to which $\mbox{\rm period}(\bar g_{n}^{*})$ is computed. 

Since $u^{*}$ is continuous in ${\mathcal A}^{\kappa}$, we have 
\begin{equation}
\label{eq;contiofconj}
u^{*}(p_n) \rightarrow u^{*}(P_0) \text{ as } n\rightarrow\infty.
\end{equation}

By applying the analysis leading to the estimate in (\ref{eq:estimatediff}) with  $p_n$ 
replacing  $\omega_n$,  we now conclude that  (\ref{eq:convofper}) holds.

It now follows that (up to choosing a subsequence) the $\phi_n$'s converge uniformly on compact subsets of ${\mathcal A}$ to 
\begin{equation}
\label{eq:confhomeo}
\Phi_{\mathcal A}(z) = \exp\big(\frac{2\pi}{\mbox{\rm period}( u^{\ast})}\big( u(z)   + i u^{*}(z) )\big).
\end{equation}

We end the proof by recalling a classical result (see for instance \cite[Section 7]{Cou} or \cite[Theorem 4.3]{Tay}) which asserts that  $\Phi_{\mathcal A}$  is a conformal homeomorphism between the interiors of 
${\mathcal A}$ and  ${\mathcal E}_{\mathcal A}$, respectively. 

\eop{Theorem~\ref{th:annulus2}}

\subsection{The case of continuous boundary.}
\label{se:anncont}
In this paragraph, we will briefly indicate why the boundary regularity assumption  in Theorem~\ref{th:annulus2} can be relaxed.  Assume that ${\mathcal A}$ is a planar annulus, where $\partial\mathcal A$ is a union of  disjoint, Jordan  curves.  

\begin{Def}[\mbox{\cite[I.6.7]{LeVi}}]
\label{de:conveinside}
A sequence of planar annuli  ${\mathcal R}_j\subset {\mathcal R}$, $j=1,2,\ldots$,  with $\{C_j^1,C_j^2\}$ as the components of their complements, {\it converges from the inside} to an annulus ${\mathcal R}$ with $\{{\mathcal R_1}, {\mathcal R_2}\}$ as components of its complement, if the following holds:  for every $\epsilon>0$ there exists 
$n_{\epsilon}$ such that for $n\geq n_{\epsilon}$ every point of $(C_j^i)_{i=1,2}$ lies within a spherical distance  less than $\epsilon$ of the set $({\mathcal R_i}, {\mathcal R_2})_{i=1,2}$. 
\end{Def}
A classical construction due to Kellogg \cite[Chapter XI.14]{Kel} grants us an existence of 
 a nested sequence of annuli, $\{\mathcal A_i\}$, where for all $i>0$, $\{\mathcal A_i\}\subsetneq {\mathcal A}$,  the boundary of 
 ${\mathcal A}_i$ is polygonal, and the sequence converges to ${\mathcal A}$ from the inside. Furthermore, since each ${\mathcal A}_i$ is made of a lattice of squares, it is easy to construct a sequence of qausi-uniform triangulation of each  ${\mathcal A}_i$,  where each triangulation satisfes  assumptions (V0)-(V3). Thus, ${\mathcal A}$  is presented as an increasing union of open subsets. The interiors of the ${\mathcal A}_i$  and each conformal embedding $\Phi_{{\mathcal A}_i}$ can be approximated according to Theorem~\ref{th:annulus2}. It follows that, up to normalization of the maps $\Phi_{{\mathcal A}_i}$, a subsequence of the $\{\Phi_{{\mathcal A}_i}\}$
will converge uniformly on compact subsets of ${\mathcal A}$, to its uniformizing map (see for instance \cite[Lemma 2.2]{Si} or \cite[Page 223 ]{Ca}). Hence, we have the following

\begin{Cor}
\label{th:anncont}
With  the additional approximation processes described in the paragraph above, we may assume in Theorem~\ref{th:annulus2} that $\partial {\mathcal A}$ is continuous.
\end{Cor}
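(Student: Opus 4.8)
The plan is to promote the single-annulus convergence result of Theorem~\ref{th:annulus2} to the case of continuous Jordan boundary by exhausting ${\mathcal A}$ from the inside with polygonal annuli, applying Theorem~\ref{th:annulus2} to each, and extracting a convergent subsequence of the resulting uniformizations via a normal-family / compactness argument. First I would invoke Kellogg's construction \cite[Chapter XI.14]{Kel} to produce a nested sequence of polygonal annuli $\{{\mathcal A}_i\}$ with ${\mathcal A}_i\subsetneq{\mathcal A}$ converging from the inside in the sense of Definition~\ref{de:conveinside}; because each ${\mathcal A}_i$ can be assembled from a lattice of squares, one can equip each with quasi-uniform triangulations satisfying (V0)--(V3), so Theorem~\ref{th:annulus2} applies verbatim to every ${\mathcal A}_i$ and yields a conformal homeomorphism $\Phi_{{\mathcal A}_i}$ onto a concentric Euclidean annulus ${\mathcal E}_{{\mathcal A}_i}$.

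The core analytic step is the passage $i\to\infty$. I would first normalize each $\Phi_{{\mathcal A}_i}$ (for instance by fixing the image of a chosen interior basepoint and a direction, and by rescaling so that the inner radius of ${\mathcal E}_{{\mathcal A}_i}$ equals $1$ as in Equation~(\ref{eq:dimann})). The maps $\Phi_{{\mathcal A}_i}$ are univalent and, being uniformly bounded on any fixed compact $K\subset{\mathcal A}$ (each $K$ is eventually contained in every ${\mathcal A}_i$, and the images lie in bounded concentric annuli), they form a normal family by Montel's theorem. A diagonalization over an exhaustion of ${\mathcal A}$ by compact sets then yields a subsequence converging uniformly on compact subsets to a limit $\Psi_{\mathcal A}$, as recorded by the classical convergence-of-univalent-maps results cited in the excerpt (\cite[Lemma 2.2]{Si}, \cite[Page 223]{Ca}). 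The Carath\'eodory-type kernel convergence argument, combined with the inside-convergence of the moduli $\bmod({\mathcal A}_i)\to\bmod({\mathcal A})$, guarantees that the limit is again a conformal homeomorphism onto the interior of the concentric Euclidean annulus determined by ${\mathcal A}$ and is nonconstant and univalent (Hurwitz's theorem rules out degeneration), rather than a constant or a non-injective limit.

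The main obstacle I anticipate is ensuring that the limit map is genuinely \emph{onto} the target annulus and does not degenerate as $i\to\infty$: a priori a normal limit of univalent maps could be constant, and the inner and outer radii $\{R_1,R_2\}$ of the approximating Euclidean annuli could collapse or blow up. Controlling this requires showing that the conformal modulus of ${\mathcal A}_i$ converges to that of ${\mathcal A}$ --- equivalently that $\mbox{\rm period}(u^{*})$ for the Dirichlet problem on ${\mathcal A}_i$ converges to the period on ${\mathcal A}$ --- which follows from the continuity of the modulus under inside-convergence of annuli together with the monotonicity of the exhaustion. Once the limiting modulus is pinned down, Hurwitz's theorem secures injectivity of the limit and the kernel theorem secures surjectivity onto ${\mathcal E}_{\mathcal A}$, completing the argument. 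The remaining steps --- verifying that each ${\mathcal A}_i$ admits the required triangulations, and that the normalizations are consistent across the subsequence --- are routine given the lattice structure of the ${\mathcal A}_i$ and the freedom in Definition~\ref{de:smoothconju} to fix $u^{*}$ up to an additive constant.
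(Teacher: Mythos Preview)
Your proposal is correct and follows essentially the same approach as the paper: exhaust ${\mathcal A}$ by Kellogg's polygonal annuli, apply Theorem~\ref{th:annulus2} to each, normalize, and pass to a subsequential limit via a compactness result for univalent maps. The paper simply delegates the entire compactness/non-degeneracy step to the citations \cite[Lemma 2.2]{Si} and \cite[Page 223]{Ca}, whereas you spell out the Montel/Hurwitz/kernel-convergence ingredients explicitly; this is a faithful unpacking of what those references supply rather than a different argument.
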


The following figures depict the evolution of two polygonal annuli under the approximation scheme provided  in Theorem 3.13.

\begin{figure}[h]
 \begin{minipage}[t]{.45\textwidth}
    \begin{center}  
      \includegraphics[width=2.25in]{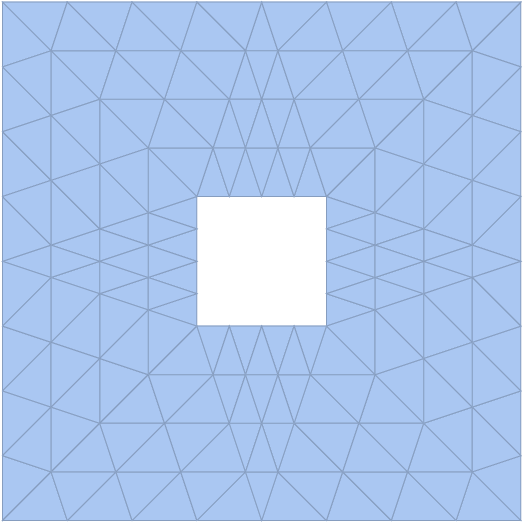}
      \label{polyann}
    \end{center}
  \end{minipage}
 \hfill 
 \begin{minipage}[t]{.45\textwidth}
    \begin{center}  
      \includegraphics[width=2.25in]{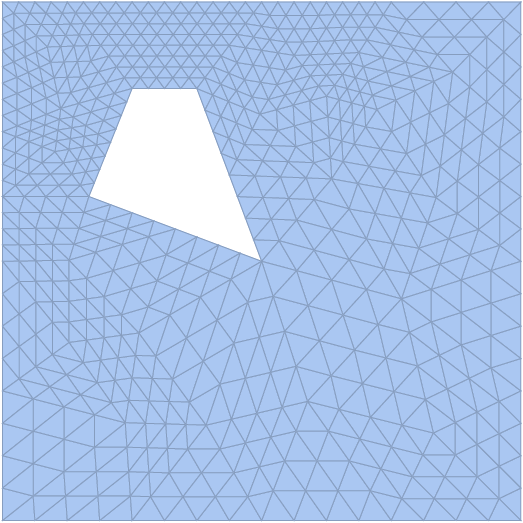}
      \label{polyann1}
    \end{center}
  \end{minipage}
  \caption{Two triangulated polygonal annuli}
 \hfill 
   \begin{minipage}[t]{.45\textwidth}
    \begin{center}  
      \includegraphics[width=2.25in]{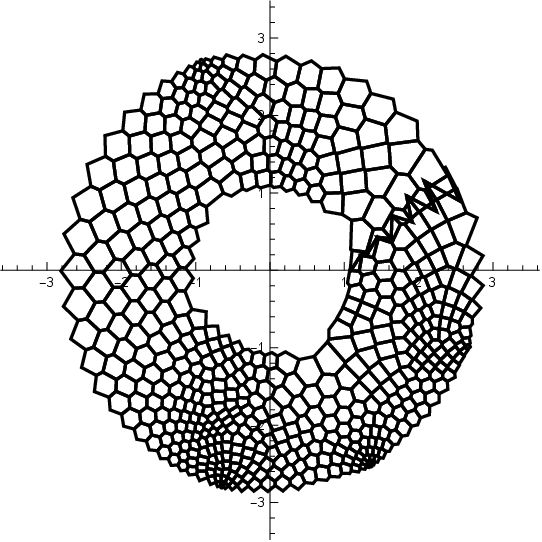}
      \label{voro1}
    \end{center}
  \end{minipage}
   \hfill 
   \begin{minipage}[t]{.45\textwidth}
   \begin{center}  
      \includegraphics[width=2.3in]{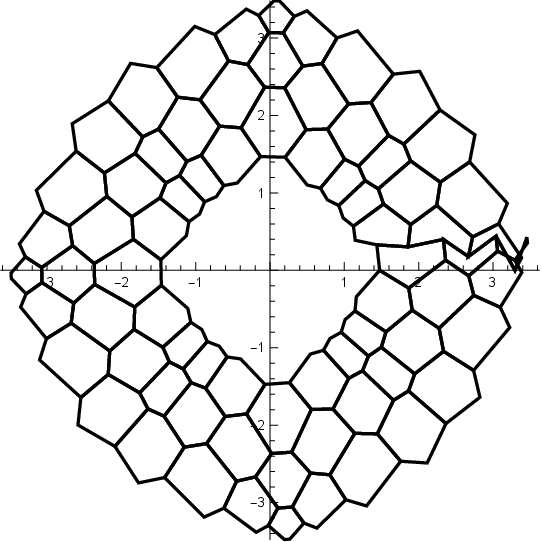}
      \label{voro2}
    \end{center}
  \end{minipage}
   \caption{The corresponding images of the Voronoi cells}
 \hfill 
 \vspace{.15In}
 \begin{minipage}[t]{.45\textwidth}
    \begin{center}  
      \includegraphics[width=2.4in]{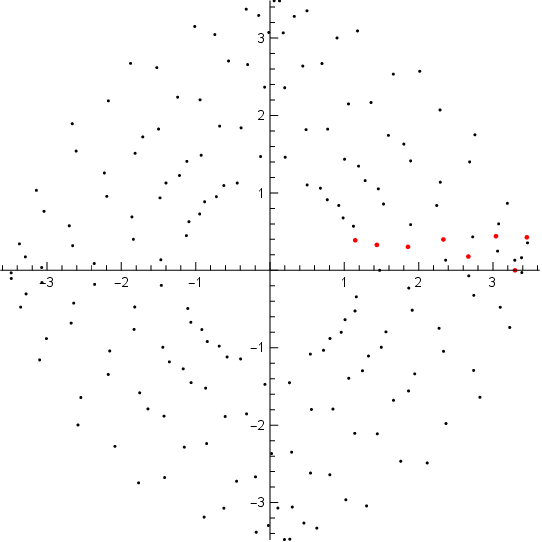}
      \label{image1}
    \end{center}
  \end{minipage}
  \hfill
  \begin{minipage}[t]{.45\textwidth}
    \begin{center}  
      \includegraphics[width=2.3in]{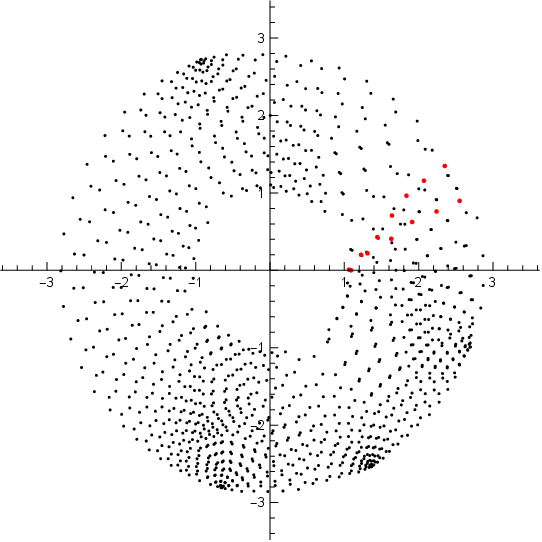}
      \label{image2}
    \end{center}
  \end{minipage}  
   \caption{Almost round annuli as the images of the polygonal annuli}
  \end{figure}


\section{The simply connected case}
\label{se:disk}
In this section, we will affirm Stephenson's Conjecture (Conjecture~\ref{con:step}) which was originally stated for the case of a bounded, simply connected, planar domain. Our point of departure is Theorem~\ref{th:annulus2} whose notation will be closely followed. 
The proof of this case entails on successively applying this theorem to an increasing sequence of annuli,  a known modification of Koebe's compactness theorem,  Riemann's  removable singularity theorem, a lemma concerning the monotonicity of periods, and a basic covering property of planar Riemann surfaces. 

\medskip

In the following, we will let 
\begin{equation}
\label{eq:invert}
\sigma(z)= \dfrac{1}{z},
\end{equation}
be the standard inversion of ${\mathbb C}$; it is well known that $\sigma$ is conformal. We can now turn to 

\medskip

\begin{Thm}
\label{th:disk}
Let $\Omega$ be a simply connected domain, embedded in $\mathbb{C}$ and  bounded by a closed, continuous curve $\Gamma$; let 
$p_0\in \Omega$ be a fixed point. Let $\{\Omega_n\}\subset \Omega$ be a nested sequence of disjoint, polygonal, Jordan disks with polygonal boundaries $\{\Theta_n\}$  such that the disks converge to $p_0$, that is,
\begin{equation}
 \Omega_1\supset \Omega_2\ldots \supset \Omega_k\ldots , 
 \end{equation}
 \begin{equation}
  \text{mesh}(\Omega_n)\rightarrow 0 \text{ as } n\rightarrow \infty,  \text{ and}
 \end{equation}
 \begin{equation}
 p_0=\cap_n \Omega_n.
 \end{equation}

For each $n$,  let 
${\mathcal A}_n = {\mathcal A}_n(\Omega,\Theta_n)$ be the  polygonal annulus defined by $\Omega \setminus \Omega_n$  with 
$\partial{\mathcal A}_n=\Gamma\cup\Theta_n$, endowed with  a sequence of quasi-uniform triangulations 
$\{{\mathcal T}_{m, {\mathcal A}_n  }\}_{m=1}^{\infty}$, such that for all $m=m( {\mathcal A}_n)$
large enough, ${\mathcal T}_{m, {\mathcal A}_n  }$
satisfies the hypotheses of Theorem~\ref{th:annulus2}.  Let 
\begin{equation}
\label{eq:maponeachannulus}
\Phi_{n}=\Phi_{n}({\mathcal A}_n):{\mathcal A}_n     \rightarrow  {\mathcal E}_{n}
\end{equation}
 be the sequence of conformal homeomorphisms constructed according to Equation~(\ref{eq;defofmaps}) 
onto the interior of concentric Euclidean annuli  ${\mathcal E}_{n}$, whose inner and outer radii are given by, respectively 
\begin{equation}
\label{eq:dimann1}
\{R_1,R_{2,n  }\}=  
\{1,  \exp\big( \frac{2\pi}{\mbox{\rm period}(u_{n}^{*})}\,     \big)\}, 
\end{equation}
where $u_{n}^{*}$ is the (smooth) harmonic conjugate of $u_n$,  the solution of the boundary value problem (\ref{de:strong}) defined on ${\mathcal A}_n$.

\smallskip
Then, a normalized subsequence of $\{\sigma\circ\Phi_n\}$ converges uniformly on compact subsets of $\Omega\setminus p_0$ to a holomorphic homeomorphism  $\Psi$  from 
$\Omega\setminus p_0$ onto $\mathbb{D}\setminus 0$.  Furthermore,  $\Psi$  can be extended 
to be holomorphic over $\Omega$.
\end{Thm}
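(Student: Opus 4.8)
The strategy is to realize the punctured simply connected domain $\Omega\setminus p_0$ as an increasing exhaustion by the annuli ${\mathcal A}_n = \Omega\setminus \Omega_n$, apply Theorem~\ref{th:annulus2} to uniformize each ${\mathcal A}_n$ by a concentric Euclidean annulus ${\mathcal E}_n$, invert via $\sigma$ so that the outer boundary $\Gamma$ maps near the unit circle and the shrinking inner boundary $\Theta_n$ maps near a small circle about the origin, and then extract a convergent subsequence whose limit $\Psi$ uniformizes $\Omega\setminus p_0$ onto $\mathbb{D}\setminus 0$. Finally, Riemann's removable singularity theorem fills in the puncture. I would organize the argument in four stages: (i) per-annulus uniformization and normalization; (ii) a compactness/diagonalization step to extract a limit map $\Psi$; (iii) verification that $\Psi$ is a holomorphic homeomorphism onto $\mathbb{D}\setminus 0$; and (iv) the removable-singularity extension over $p_0$.

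\textbf{Stages (i)--(ii).} For each $n$, Theorem~\ref{th:annulus2} (combined with Corollary~\ref{th:anncont} so the boundary regularity is not an issue, though here the ${\mathcal A}_n$ are already polygonal) supplies a conformal homeomorphism $\Psi_n:{\mathcal A}_n\to {\mathcal E}_n$ onto a concentric annulus with inner radius $1$ and outer radius $R_{2,n}=\exp\!\big(2\pi/\mbox{\rm period}(u_n^{*})\big)$. Composing with the inversion $\sigma(z)=1/z$ turns ${\mathcal E}_n$ into the concentric annulus of inner radius $1/R_{2,n}$ and outer radius $1$. After a rotational normalization (fixing, say, the image of one boundary point, or prescribing $\sigma\circ\Psi_n$ to send a fixed interior reference point to a fixed positive real value), the maps $\sigma\circ\Psi_n$ are univalent holomorphic functions on the fixed domains $K\subset\subset \Omega\setminus p_0$ for all large $n$, with images contained in $\overline{\mathbb{D}}$. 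Thus the family is uniformly bounded, hence normal by Montel's theorem, and the cited compactness results (\cite[Lemma 2.2]{Si}, \cite[Page 223]{Ca}) give a subsequence converging uniformly on compact subsets of $\Omega\setminus p_0$ to a holomorphic limit $\Psi$. A diagonalization over an exhaustion $K_1\subset K_2\subset\cdots$ of $\Omega\setminus p_0$ produces a single subsequence that works on all compacta simultaneously.

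\textbf{Stages (iii)--(iv).} The limit $\Psi$ is holomorphic and, being a locally uniform limit of univalent maps, is by Hurwitz's theorem either univalent or constant; the normalization rules out the constant case, so $\Psi$ is injective. To identify the image as exactly $\mathbb{D}\setminus 0$ I would control the two radii: as $n\to\infty$ the inner holes $\Omega_n$ shrink to $p_0$, so the modulus $\mbox{\rm period}(u_n^{*})$ of ${\mathcal A}_n$ tends to infinity and $R_{2,n}\to\infty$, whence the inner radius $1/R_{2,n}\to 0$; this forces the image to fill $\mathbb{D}\setminus 0$ and is exactly where the promised \emph{monotonicity of periods} lemma enters, guaranteeing the radii converge in the correct direction rather than oscillate. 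For the removable singularity: $\Psi$ is holomorphic and bounded on a punctured neighborhood of $p_0$ (it lands in $\mathbb{D}$), so Riemann's removable singularity theorem extends it holomorphically across $p_0$, and continuity forces $\Psi(p_0)=0$, yielding a holomorphic homeomorphism $\Omega\to\mathbb{D}$.

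\textbf{The main obstacle.} The delicate point is stage (iii): showing the limit $\Psi$ is a \emph{surjection} onto $\mathbb{D}\setminus 0$ and not merely an injection into some smaller annulus or punctured disk. Injectivity follows cleanly from Hurwitz, and boundedness is automatic, but pinning down that the inner radii $1/R_{2,n}$ decrease to $0$ (so no annular gap survives around the origin) and that the outer boundary genuinely approaches the full unit circle requires the monotonicity-of-periods input together with the basic covering property of planar Riemann surfaces cited in the section introduction. Keeping the normalizations coherent along the subsequence so that the limiting map does not degenerate, while simultaneously tracking the two moving boundary radii, is the technical heart of the proof.
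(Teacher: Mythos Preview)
Your overall plan matches the paper's: exhaust $\Omega\setminus\{p_0\}$ by the ${\mathcal A}_n$, uniformize each via Theorem~\ref{th:annulus2}, invert, extract a limit, and apply Riemann's removable singularity theorem. The route diverges at the compactness step. The paper does not use Montel with a rotational normalization; instead it post-composes $\sigma\circ\Psi_n$ with further conformal embeddings $f_n:A_n\to\mathbb{C}$ so that $\Upsilon_n=f_n\circ\sigma\circ\Psi_n$ satisfies a two-point normalization $\Upsilon_n(z_0)=\xi_0$ and $\Upsilon_n'(z_0)=1$, and then invokes a Koebe-type compactness theorem (\cite[Proposition~7.5]{ConII}) together with diagonalization to obtain a univalent, non-constant limit directly. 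Your Montel route has the virtue that boundedness---hence the hypothesis of the removable singularity theorem---is automatic, but your claim that ``the normalization rules out the constant case'' is too quick: a rotation alone does not prevent collapse to a constant, and what actually does (for instance, $|\sigma\circ\Psi_n(q_0)|=R_{2,n}^{-u_n(q_0)}\to e^{-G_{\Omega}(q_0,p_0)}\in(0,1)$, with $G_\Omega$ the Green's function of $\Omega$) is precisely the content you rightly flag as the main obstacle. The paper also proves the period-monotonicity lemma explicitly, via the identity $\mbox{\rm period}(u_n^{*})=\int_{{\mathcal A}_n}|\nabla u_n|^2\,dx$ and Dirichlet's principle, and it identifies the limit as the Riemann mapping only \emph{after} extending across $p_0$, rather than first showing the image is $\mathbb{D}\setminus\{0\}$ as you propose.
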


\noindent{\it Proof.}
Following the rationale preceding Corollary~\ref{th:anncont}, we may assume that  $\Gamma$ is polygonal.
By construction, the $\{{\mathcal A}_n\}$ is a strictly increasing sequence, that is,
\begin{equation}
\label{eq:annuliunderinversion}
{\mathcal A_1} \subsetneq {\mathcal A_2}\subsetneq \ldots \subsetneq {\mathcal A_k}\ldots
\end{equation}
which all share  $\Gamma=\partial \Omega$ as their outer boundary component, and 
with $\Omega\setminus \{p_0\}$ being their union. The following lemma is needed in order to understand a monotonicity property of the sequence $\{ A_n\}$.


\smallskip

\begin{Lem}
\label{le:periodsdecrease}
The sequence $\{\text{period}(u_n^*)\}$ is strictly decreasing.
\end{Lem}

\dem
By Green's theorem, for all $n>1$ we have that, 
\begin{equation}
\label{eq:greesmooth}
\int_{{\mathcal A}_n}|\nabla u_n|^2 dx+ \int_{{\mathcal A}_n}\Delta u_n u_n  dx= \int_{\partial {\mathcal A}_n}\frac{\partial u_n}{\partial n} ds.
\end{equation}
However, by the definition of $\text{period}(u_n)$, and since $u_n$ is the solution of the boundary value problem (\ref{de:strong}) defined on ${\mathcal A}_n$, for all $n>1$, we have that 
\begin{equation}
\label{eq:greesmooth1}
\int_{{\mathcal A}_n}|\nabla u_n|^2 dx=\text{period}(u_n).
\end{equation}

It is clear that for all $n>1$, $u_n$ can be extended to be zero on ${\mathcal A}_{n+1}\setminus {\mathcal A}_{n+1}$ to a piecewise smooth function on ${\mathcal A}_{n+1}$ having the same boundary values as those of $u_{n+1}$. The assertion of the lemma now follows by the well-known characterization of $u_{n+1}$ as the unique minimizer of the Dirichlet integral over ${\mathcal A}_{n+1}$.

\eop{Lemma~\ref{le:periodsdecrease}}

\smallskip

It follows from Equation (\ref{eq:invert}), Equation (\ref{eq:dimann1}) and  the Lemma,  that the sequence $\{A_n=\sigma({\mathcal E}_n)\}$ consists of planar, concentric, Euclidean annuli,  such that  the inner and outer radii of each $A_n$ are given by 
\begin{equation}
\label{eq:dimann11}
\{r_1,r_{2,n  }\}=  
\{1, 1/ \exp\big( \frac{2\pi}{\mbox{\rm period}(u_{n}^{*})}\,     \big)\}, 
\end{equation}
  respectively; where  the sequence $\{ r_{2,n  }\}$ is strictly decreasing. Note that all the $A_n$'s 
 share ${\mathbb S}^{1} =\partial {\mathbb D}$ as their outer boundary component,  
\begin{equation}
\label{eq:imageannuli}
A_1\subsetneq A_2\subsetneq\ldots\subsetneq A_k\ldots,
\end{equation}
and the sequence $\{ A_n\}$ exhausts  ${\mathbb D}\setminus 0$.

\begin{figure}[htbp]
\begin{center}
 \scalebox{.50}{ \input{Riemann.pstex_t}}
 \caption{The evolution of $\Omega$.}
\label{figure:annulii}
\end{center}
\end{figure}

\smallskip
Pick $z_0\in A_1$, a local complex parameter at $z_0$, and a fixed $\xi_0\in {\mathbb C}$.
For all $n>1$, we now apply a normalization by post composing $\sigma \circ \Phi_n$ with a conformal embedding $f_n: A_n\rightarrow {\mathbb C}$ so that the composed maps 
\begin{equation}
\label{eq:normalizeedmaps}
\Xi_n = f_n\circ \sigma\circ \Phi_n : {\mathcal A}_n\rightarrow {\mathbb C}
\end{equation}
satisfy 
\begin{equation}
\label{eq:normalize}
\Xi_n (z_0)=\xi_0 \text{ and } \Xi_n '(z_0)=1.
\end{equation}

\smallskip
Note that the image of each $A_n$ is still a concentric Euclidean annulus, yet the sequence $\{\Xi_n({\mathcal A}_n)\}$ is not (generically) concentric. Nevertheless, 
it follows from a modification of Koebe's compactness theorem (see for instance \cite[Proposition 7.15]{ConII}) and a Cantor diagonalization process, that a subsequence of the $\{\Xi_n\}$ converges uniformly on compact subsets of $\Omega\setminus p_0$, to a conformal, univalent mapping  
\begin{equation}
\label{eq:the limiting map}
\Xi:\Omega\setminus p_0  \rightarrow   {\mathbb C},
\end{equation}
which is obviously not constant.
 It is also evident that $\Xi$ is bounded, and therefore, by  Riemann's removable singularity mapping theorem, can be extended to a conformal, univalent, embedding from $\Omega$. Hence, the extended map must be equal to the Riemann mapping with the same normalization. This ends the proof of the Theorem.
 
\eop{Theorem~\ref{th:disk}}

\medskip

Our current research is addressing the following themes.

\medskip

\paragraph{\bf 1. Disk packing and quasi-uniform triangulations.}
It is well known (see for instance \cite[Section 5]{Dub})  that a sequence of disk packings satisfying some minor conditions, induce  (as explained in \ref{eq:regularconstant}) a sequence of quasi-uniform triangulations, that will in addition satisfy assumptions (V0) and (V1). However,  assumption (V2) (see page 14) will not always be satisfied; it will be satisfied (for instance) for sub-packings of scaled copies of the infinite hexagonal disk packing (which were the subject of Thurston's original conjecture). Recall that assumption (V2) was used in the proof of Theorem~\ref{th:annulus2} \emph{only} in the part addressing the convergence of the $\bar g_n^{\ast}$. 
\bigskip

\paragraph{\bf  2. The case of higher connectivity.}
As mentioned in the introduction, Stephenson's original conjecture can be formulated for any
finitely connected, Jordan  domain. However, several issues need to be addressed before an appropriate statement can be made. For instance, the existence of singular points and level curves for smooth harmonic functions solving a Dirichlet problem (analogous to the one in Theorem~\ref{th:annulus2}) on such domains needs to be addressed. 

\bigskip
\paragraph{\bf  3.  Effective computational  approach through parallel processing.}
Polygon approximation of the boundary curves in the 2D or the 3D shape is essential for the computational aspects of this work and its successors. It will be used to smooth out any irregularities which may be present in the planar curves due to various effects, and  to achieve data reduction. In order to turn the computational problem to a parallel processing scheme, it is natural to utilize techniques from \cite{Ara, WanAra} and a decomposition of the given domain to subdomains as proposed in \cite{Her1}. Finally, we will compare our computational results with those obtained in \cite{AraOli} for the cases of scaling and rotation which are two useful transformations in the field of computer vision and imaging.



\newpage

\appendix

\section{preparatory facts}
\label{se:notation}

\subsection{The finite element method}
\label{se:notation1}

In this section, we will assume that  $\Omega$ is a fixed, bounded, $m$-connected with $m\geq 2$, polygonal domain in $\mathbb{R}^2$. 
The finite element method (FEM) is a powerful discretization scheme, aimed at constructing and presenting approximations of solutions of partial differential equations in the form of algebraic set of equations; the unknowns are 
the coefficients of a linear combination of the basis elements of a linear space comprising of simple functions: piecewise linear polynomials. 

\smallskip

The phrase 
``finite element" reflects on the process of approximation. 
 The domain in which the boundary value problem is define is divided into a collection of subdomains, where each subdomain is presented by a set of element equations derived from  the original problem. One then systematically combines all of the sets of element equations into a global system of equations for the final calculation.

\smallskip

We now turn to a specific boundary value problem that we will consider, the homogeneous Dirichlet  equation: Given $f \neq 0\in L^{2}(\Omega)$, find $\tilde u\in C^2(\Omega)\cap C(\bar \Omega)$ satisfying 
\begin{equation}
\label{eq:diffusion}
 \triangle \tilde u = f\  \mbox{\rm in}\  \Omega,\  \text{and } \tilde u=0 \text{ on } \partial\Omega.
\end{equation}

Such a solution is called a {\it strong  solution}.



\smallskip

A common theme in finite element method is to triangulate the domain in a geometric 
convenient way.


\begin{Def}
\label{de:triangulation}
A triangulation ${\mathcal T}$ of $\Omega$ is a set of (closed) triangles $T_i$, $i=1,\ldots,n$ such that the following hold
\begin{equation}
\bar\Omega=\bigcup_{i=1}^{n}T_i, \ \mbox{\rm and}\ T_i\cap T_j=\emptyset, \ \mbox{\rm a vertex or one common edge},\ 
\mbox{\rm for all}\ i\neq j.
\end{equation}
\end{Def}

The following  quantity is associated with a fixed triangulation. 
\begin{Def}
\label{de:mesh}
Let ${\mathcal T}$ be a triangulation on $\Omega$, the mesh size of ${\mathcal T}$ is equal to 
\begin{equation}
\sup_{T\in {\mathcal T}} d(T),
\end{equation}
where $d(T)$ denotes the diameter of $T$ (i.e., the length of its largest edge). Henceforth, ${\mathcal T}_{\rho}$ will denote a triangulation of $\Omega$ of mesh size that is equal to $\rho$.
\end{Def}

In order to apply the machinery of numerical approximation of elliptic boundary value problems, one needs to avoid situations where triangles, in any triangulation
 ${\mathcal T}_{\rho}$ of the domain, become flat as $\rho\rightarrow 0$. To this end, we let $\sigma(T)$ denote the diameter of the largest circle that can be inscribed in a triangle $T$. We now define the geometric property of the special class of triangulations that will be used throughout this paper.

\begin{Def}
\label{de:regular}
A family of triangulations $\{ {\mathcal T}_\rho\}$ of $\Omega$ is called  $\tau$-{\it quasi-uniform} (or  $\tau$-{\it quasi-regular})  when $\rho\rightarrow 0$, if there exists a positive constant $\tau$ such that
\begin{equation}
\label{eq:regularconstant}
\frac{d(T)}{\sigma(T)}\leq \tau \   \mbox{\rm for all} \ T\in {\mathcal T}_\rho,\  \mbox{\rm and for all}\ \rho \text{ small enough}.
\end{equation}
\end{Def}

\smallskip


Given a triangulation we will now associate to it a vector space of functions.

\begin{Def}[]
\label{de:elementspace}
\begin{equation}
\label{eq:finitespace}
\mathbb{V}_{0,{\mathcal T}}= \{ \phi :\Omega\rightarrow \mathbb{R}\  | \phi \in C(\bar\Omega),\ v|{T}\in {\mathcal P}_{1}(T) \ \mbox{\rm for all}\ T\in{\mathcal T}\ \text{and}\   \phi=0\ \mbox{on } \partial\Omega\},
\end{equation}
where ${\mathcal P}_{1}(T)$ denotes the space of linear polynomials in two variables over $T$. 
\end{Def}
Let $V^{0}({\mathcal T})$ denote  the set of vertices in ${\mathcal T}^{(0)}$ which are in the interior of $\Omega$, and set 
\begin{equation}
\label{eq:nuofvertices}
M_1= |V^{0}({\mathcal T})|, M_2=|{\mathcal T}^{(0)}  \cap\partial\Omega| \text{ and }  M=M_1+M_2\  (=|{\mathcal T}^{(0)}|).
\end{equation}
  It is well  known that   $\mathbb{V}_{0,{\mathcal T}}$ is a finite dimensional vector space which is spanned by $\{\phi_i\}$ - {\it the nodal basis};  where by definition 
\begin{equation}
\label{eq:conditions}
\phi_i(x_j)=\delta_{i,j}, \text{for all } x_j\in V^{0}({\mathcal T}) ,i=1,\ldots,M_1. 
\end{equation}

One important feature of $\mathbb{V}_{0,{\mathcal T}}$ is that it is 
a linear subspace of a certain Sobolev space.   Let us first recall 
\begin{Def}
\label{de:SobSpace}
The Sobolev space $H^{1,2}(\Omega)$ is the subset of $L^{2}(\Omega)$ defined by
\begin{equation}
\label{eq:SobSpace}
H^{1,2}(\Omega)= \{ v\in L^{2}(\Omega) \ |\  \partial_x v, \partial_y v \in  L^{2}(\Omega)\},  
\end{equation}
where $\partial_x v, \partial_y v$ denote the distributional derivatives of $v$ in the $x$ and the $y$ directions, respectively. The integration is with respect  to the standard Lebesgue measure in the plane which will be denoted by $dx$.

\end{Def}
For $u,v\in H^{1,2}(\Omega)$, one defines the scalar product and an associated norm, respectively, by  
\begin{equation}
\label{eq:scalarproduct}
(u,v)_{1,2}= \int_{\Omega} (uv +\nabla u \cdot \nabla v),\  |u|_{1,2}^2=(u,u)_{1,2}
\end{equation}
where $\nabla v= (\partial_x v,\partial_y v)$, and the scalar product is the Euclidean one  in $\mathbb{R}^{2}$.
It is well known that  $H^{1,2}(\Omega)$  equipped with this scalar product  is a Hilbert space. Finally, let $H^{1,2}_{0}(\Omega)$ be defined as the closure of $C^{\infty}_{0}(\Omega)$ in $H^{1,2}(\Omega)$ with respect to this norm.   Equipped with this scalar product $H^{1,2}_{0}(\Omega)$ is a Hilbert space as well. It is a useful fact that  $\mathbb{V}_{0,\mathcal T}$  is a linear subspace of $H^{1,2}_{0}(\Omega)$.  
 
\medskip

The first step in finite element method amounts to finding the {\it weak solution} of  the boundary value problem  (\ref{eq:diffusion}).  That is, finding  $u\in H^{1,2}_{0}(\Omega)$ such that 
 
 \begin{equation}
 \label{eq:intalongvolumes}
 \int_{\Omega} \nabla u\cdot \nabla v\, dx =\int_{\Omega} f v\,dx, \ \text{for all }\ v\in H^{1,2}_{0}(\Omega).
 \end{equation}

Next, one replaces the space $H^{1,2}_{0}(\Omega)$ by a {\it sequence} of linear subspaces that exhaust it, i.e., one lets $\rho$ converge to zero and search for a solution of  the above equation in
$\mathbb{V}_{0,{\mathcal T}_\rho}$: Finding $u_\rho \in \mathbb{V}_{0,{\mathcal T}_\rho}$ such that 

\begin{equation}
 \label{eq:intalongvolumes1}
 \int_{\Omega} \nabla u_\rho\cdot \nabla v_\rho \, dx =\int_{\Omega} f v_\rho\  dx, \ \text{for all }\ v_\rho\in \mathbb{V}_{0,{\mathcal T}_\rho}.
\end{equation}

In section~\ref{se:ENFT}, we  recall a foundational result concerning the  convergence of the piecewise linear polynomials $u_\rho$  as $\rho$ converges to zero.


 \subsection{Stephenson's conductance constants - a finite element method perspective}
 \label{se:ENDF}
 
In this section, we will work with an explicit form of the solution $u_\rho$ of 
 (\ref{eq:intalongvolumes1}).  We will recall the known fact that $u_\rho$ (for each $\rho$) can be derived form the solution of a system of finitely many linear equations. Following this, 
we will define a quantity,  the {\it discrete flux} of $u_\rho$, which will be used  in this paper to approximate the analytical flux of $u$, the solution of (\ref{eq:diffusion}). To this end, note that the interpolation conditions in (\ref{eq:conditions}) allows us to write in a unique way
\begin{equation}
\label{eq:coefficients}
u_\rho(x) = \sum_{i=1}^{M_1}u_{\rho}(x_i)\phi_i(x),\ \text{ for } x\in \Omega,
\end{equation}
where the unknowns are $ u_{\rho}(x_i)$ for all $i=1,\dots, M_1$. 
Therefore, if we define  $u_{\rho}(i)=u_{\rho}(x_i)$ for all $i=1,\ldots, M_1$, then our vector of unknowns for $u_\rho$ is given by  
${\bf \xi}_\rho=(u_\rho(1), \ldots, u_\rho(M_1))$. Hence, (\ref{eq:intalongvolumes1}) can be written as the   following matrix equation
\begin{equation}
\label{eq:matrix}
{\bf A}_{\rho}{\bf\xi}_{\rho} = {\bf q}_{\rho}, 
\end{equation} 
where we have for all $i,j =1,\ldots, M_1$ 
\begin{equation}
\label{eq:matrixpresentation}
\begin{array}{ccl}
  (\bf A_\rho)_{i,j}&   = &\int_{\Omega}\nabla\phi_j \cdot \nabla\phi_i   \\
  &   &  \\
  (\bf q_\rho)_{i}& =  &\int_{\Omega} f \phi_i     
\end{array}
\end{equation}

\medskip

We keep the notation as in the discussion preceding Figure~\ref{figure:ControlVolume}.
The following lemma and its corollary allow us to turn each ${\mathcal T}_\rho$ into a finite network and provide  a relation between Stephenson's constants and the Finite Element Method.                  

\begin{Lem}[\mbox{\cite[Lemma 6.8]{AnKn}}]
\label{le:asfinitedifference}
 Let ${\mathcal T}$ be any fixed triangulation of\ 
$\Omega$, and consider its corresponding Voronoi diagram. Then, for an 
arbitrary triangle $T\in  {\mathcal T}_\rho$ with vertices $x_i,x_j (i\neq j)$, the following relation holds
\begin{equation}
\label{eq:overatriangle}
\int_{T} \nabla \phi_j \cdot \nabla \phi_i\  dx= - \frac{m_{(i,j)}^{T}}{d_{ij}},
\end{equation} 
where $m_{(i,j)}^{T}$ is the length of the segment of $\Gamma_{ij}$ which intersects T.
\end{Lem}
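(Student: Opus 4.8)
The plan is to route the computation through the classical \emph{cotangent formula} for piecewise linear finite elements, which serves as the bridge between the analytic quantity $\int_T \nabla\phi_i\cdot\nabla\phi_j\,dx$ on the left and the geometric length $m_{ij}^T$ on the right. Writing $v_k$ for the third vertex of $T$ and $\theta_k$ for the interior angle of $T$ at $v_k$, I would establish the two identities
\begin{equation}
\int_T \nabla\phi_i\cdot\nabla\phi_j\,dx = -\tfrac12\cot\theta_k \quad\text{and}\quad \frac{m_{ij}^T}{d_{ij}} = \tfrac12\cot\theta_k,
\end{equation}
after which the lemma follows by comparison. The nonobtuseness hypothesis (V0) enters precisely here: it guarantees that $\cot\theta_k\ge 0$ and that the circumcenter $c_T$ lies inside (or on the boundary of) $T$, so that the relevant length is read off with no sign ambiguity.

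For the first identity, recall that on the single triangle $T$ the barycentric coordinates $\phi_i,\phi_j,\phi_k$ are affine, hence their gradients are constant vectors and $\int_T \nabla\phi_i\cdot\nabla\phi_j\,dx = |T|\,\nabla\phi_i\cdot\nabla\phi_j$. Rather than compute each gradient explicitly, I would use the partition-of-unity relation $\phi_i+\phi_j+\phi_k\equiv 1$ on $T$, which upon differentiating gives $\nabla\phi_i+\nabla\phi_j+\nabla\phi_k=0$. Squaring $\nabla\phi_k=-(\nabla\phi_i+\nabla\phi_j)$ and inserting the standard magnitudes $|\nabla\phi_i| = a/(2|T|)$, $|\nabla\phi_j|=b/(2|T|)$, $|\nabla\phi_k|=c/(2|T|)$ (with $a=|v_j-v_k|$, $b=|v_i-v_k|$, $c=|v_i-v_j|=d_{ij}$ the side lengths) yields $2\,\nabla\phi_i\cdot\nabla\phi_j = (c^2-a^2-b^2)/(4|T|^2)$. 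The law of cosines at $v_k$ gives $c^2-a^2-b^2 = -2ab\cos\theta_k$, and substituting $|T|=\tfrac12 ab\sin\theta_k$ collapses the expression to $-\tfrac12\cot\theta_k$.

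For the second identity, I note that $\Gamma_{ij}$ lies on the perpendicular bisector of $[v_i,v_j]$, so the portion of $\Gamma_{ij}$ inside $T$ is the segment from the circumcenter $c_T$ to the midpoint $M_{ij}$ of $[v_i,v_j]$; thus $m_{ij}^T = |c_T-M_{ij}|$. In the right triangle with vertices $c_T$, $v_i$, $M_{ij}$ one has $|c_T-v_i|=R$ (the circumradius), $|v_i-M_{ij}|=d_{ij}/2$, and a right angle at $M_{ij}$, so $m_{ij}^T=\sqrt{R^2-(d_{ij}/2)^2}$. Inserting $R=d_{ij}/(2\sin\theta_k)$ (law of sines, $\theta_k$ opposite the side $d_{ij}$) and simplifying gives $m_{ij}^T=\tfrac{d_{ij}}{2}\cot\theta_k$, the positive root being the genuine length under (V0). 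Dividing by $d_{ij}$ and matching with the first identity finishes the proof. The only genuinely delicate point is the placement of $c_T$: for an obtuse angle the circumcenter would fall outside $T$ and the segment $c_T$--$M_{ij}$ could acquire the opposite orientation, which is exactly why (V0) is assumed throughout; under it no such case arises and the two displayed identities agree term by term.
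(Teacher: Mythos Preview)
Your proof is correct. The paper itself does not supply a proof of this lemma; it is quoted verbatim from \cite[Lemma~6.8]{AnKn} and used as a black box in Appendix~C, so there is no in-paper argument to compare against. Your route through the cotangent formula is the standard one: both identities
\[
\int_T \nabla\phi_i\cdot\nabla\phi_j\,dx = -\tfrac12\cot\theta_k,
\qquad
m_{ij}^T = \tfrac{d_{ij}}{2}\cot\theta_k
\]
are established cleanly, and your remark that condition~(V0) is what forces $\cot\theta_k\ge 0$ and keeps $c_T$ inside $\overline{T}$ (so that $m_{ij}^T=|c_T-M_{ij}|$ with no sign issue) is exactly the right observation. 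One minor stylistic point: rather than going through $\sqrt{R^2-(d_{ij}/2)^2}$ and the law of sines, you could note directly that the inscribed angle theorem gives $\angle M_{ij}\,v_i\,c_T = \tfrac{\pi}{2}-\theta_k$, whence $m_{ij}^T = \tfrac{d_{ij}}{2}\tan(\tfrac{\pi}{2}-\theta_k)=\tfrac{d_{ij}}{2}\cot\theta_k$; but this is cosmetic.
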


A computation then shows that
\begin{Cor}[\mbox{\cite[Corollary 6.9]{AnKn}}]   
\label{co:innerproductformulation}
Under the assumptions of Lemma ~\ref{le:asfinitedifference}, we have 
\begin{equation}
\label{eq:finitedifference}
\int_{\Omega} \nabla u_\rho \cdot \nabla \phi_i \ dx = \sum_{x_j\sim x_i, j\neq i}\frac{m_{(i,j)}}{d_{ij}} \big(u_{\rho}(x_i) -u_{\rho}(x_j) \big).
\end{equation}
\end{Cor}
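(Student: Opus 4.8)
The plan is to expand $u_\rho$ in the nodal basis and reduce everything to the single-edge computation already supplied by Lemma~\ref{le:asfinitedifference}. Writing $\{\phi_k\}$ for the full collection of hat functions (one per vertex of ${\mathcal T}_\rho$, interior and boundary) and recalling that $u_\rho\in\mathbb{V}_{0,{\mathcal T}_\rho}$ vanishes at the boundary vertices, we have $u_\rho=\sum_k u_\rho(v_k)\phi_k$, where the coefficients attached to boundary vertices are zero. By bilinearity of the Dirichlet form,
\begin{equation}
\int_{\Omega}\nabla u_\rho\cdot\nabla\phi_i\,dx=\sum_k u_\rho(v_k)\int_{\Omega}\nabla\phi_k\cdot\nabla\phi_i\,dx,
\end{equation}
so it suffices to evaluate the stiffness entries $a_{ik}:=\int_{\Omega}\nabla\phi_k\cdot\nabla\phi_i\,dx$.

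For the off-diagonal entries I would argue by support. The product $\nabla\phi_k\cdot\nabla\phi_i$ is supported on the (closed) triangles containing both $v_i$ and $v_k$; when $k\neq i$ this support has positive measure only if $v_k$ is a neighbour of $v_i$, in which case it consists of the triangles containing the edge $[v_i,v_k]$. Summing Lemma~\ref{le:asfinitedifference} over these triangles, and using that the pieces $m_{ik}^{T}$ add up to the full length $m_{ik}$ of $\Gamma_{ik}$, gives $a_{ik}=-m_{ik}/d_{ik}$ for every neighbour $v_k$ of $v_i$, and $a_{ik}=0$ for all other $k\neq i$.

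The diagonal entry $a_{ii}=\int_\Omega|\nabla\phi_i|^2\,dx$ is the one delicate point, since Lemma~\ref{le:asfinitedifference} is stated only for $i\neq j$, and I expect it to be the main obstacle. Here I would invoke the partition-of-unity identity $\sum_k\phi_k\equiv 1$ on $\Omega$, which upon differentiation yields $\sum_k\nabla\phi_k\equiv 0$, hence $\nabla\phi_i=-\sum_{k\neq i}\nabla\phi_k$. Pairing with $\nabla\phi_i$ and integrating gives
\begin{equation}
a_{ii}=-\sum_{k\neq i}a_{ik}=\sum_{j\in\Omega_i}\frac{m_{ij}}{d_{ij}},
\end{equation}
the sum running over all neighbours of $v_i$. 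Thus the self-energy term is dispatched cleanly by the telescoping built into the hat-function partition of unity, rather than by any further geometric computation.

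Finally I would recombine the pieces. Separating the diagonal term and inserting the values of $a_{ik}$,
\begin{equation}
\int_{\Omega}\nabla u_\rho\cdot\nabla\phi_i\,dx=u_\rho(v_i)\sum_{j\in\Omega_i}\frac{m_{ij}}{d_{ij}}-\sum_{j\in\Omega_i}u_\rho(v_j)\frac{m_{ij}}{d_{ij}}=\sum_{j\in\Omega_i}\frac{m_{ij}}{d_{ij}}\big(u_\rho(v_i)-u_\rho(v_j)\big).
\end{equation}
Since $u_\rho$ vanishes at the boundary vertices, any neighbour $v_j\in\partial\Omega$ contributes only through the term $u_\rho(v_i)$, which is precisely the contribution already accounted for by $a_{ii}$; hence the telescoped sum may legitimately be taken over the full neighbour set $j\in\Omega_i$, exactly as stated, completing the proof.
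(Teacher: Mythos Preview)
Your proof is correct and is precisely the ``computation'' the paper alludes to (the paper itself gives no argument beyond the phrase ``A computation then shows that'' and a citation to \cite{AnKn}). Expanding $u_\rho$ in the nodal basis, using Lemma~\ref{le:asfinitedifference} for the off-diagonal stiffness entries, and handling the diagonal entry via the partition-of-unity identity $\sum_k\nabla\phi_k\equiv 0$ is the standard route and matches what one finds in \cite{AnKn}.
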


Hence, by letting the index $i$ range over the indices of the interior vertices (i.e, those that are in $V_{\rho}^{0}({\mathcal T}_{\rho})$),  (\ref{eq:matrixpresentation}) turns into the following  {\it system of linear equations} 

\begin{equation}
\label{eq:linearsystem}
\sum_{x_j\sim x_i,j\neq i}\frac{m_{(i,j)}}{d_{ij}} \big(u_{\rho}(x_i) -u_{\rho}(x_j) \big) = \int_{\Omega} f \phi_i\ dx, \text{ for all } i=1,\ldots, M.
\end{equation}

\smallskip

\begin{Rem}
Note that when $f\equiv 0$, $u_{\rho}$ is a {\it discrete} harmonic function on ${\mathcal T}_{\rho}^{(0)}$ with the conductance constant $\frac{m_{(i,j)}}{d_{ij}}$ for the edge joining $x_i$ to $x_j$.
\end{Rem}




\vspace{0.2in}

\end{document}